\documentclass[11pt,twoside,letter]{article} 

\usepackage{amssymb}
\usepackage{graphicx,rotating}
\usepackage{amscd}
\usepackage{color}
\usepackage{float}
\usepackage{graphics}
\usepackage{amsmath}
\usepackage{amssymb}
\usepackage{mathrsfs}
\usepackage{amsthm}
\usepackage{amsfonts}
\usepackage{latexsym}
\usepackage{epsf}
\usepackage[]{hyperref}
\usepackage{fancyhdr}
\usepackage{mathptmx}
\usepackage{longtable}
\usepackage{array}
\usepackage{csquotes}
\usepackage{verbatim}
\usepackage{enumerate}
\usepackage[margin=1.5in]{geometry}
\usepackage{rotating}
\usepackage{caption}
\usepackage{mdframed}
\usepackage{colortbl}


\newcolumntype{P}[1]{>{\centering\arraybackslash}p{#1}}
\newmdtheoremenv{theo}{Theorem}

\makeatletter
\newenvironment{proof*}[1][\proofname]{\par
  \pushQED{\qed}%
  \normalfont \partopsep=\z@skip \topsep=\z@skip
  \trivlist
  \item[\hskip\labelsep
        \itshape
    #1\@addpunct{.}]\ignorespaces
}{%
  \popQED\endtrivlist\@endpefalse
}
\makeatother

\setcounter{page}{1} 

\setlength{\textheight}{21.6cm} 

\setlength{\textwidth}{14cm} 

\setlength{\oddsidemargin}{1cm} 

\setlength{\evensidemargin}{1cm} 

\pagestyle{myheadings} 

\thispagestyle{empty}

\markboth{\small{CLSU Undergraduate Research in Mathematics}}{\small{On $k$-dprime Divisor Function Graph}}

\date{}

\begin{document}

\centerline{\bf Central Luzon State University}

\centerline{\bf Undergraduate Research in Mathematics}

\centerline{Academic Year 2021-2022} 

\centerline{} 

\centerline {\Large{\bf On $k$-dprime Divisor Function Graph}} 
\vspace{2mm}










\newtheorem{theorem}{Theorem}[section]
\newtheorem{lemma}[theorem]{Lemma}
\newtheorem{corollary}[theorem]{Corollary}
\newtheorem{proposition}[theorem]{Proposition}
\newtheorem{definition}[theorem]{Definition} 
\newtheorem{example}[theorem]{Example}
\newtheorem{remark}[theorem]{Remark}
\newtheorem{illustration}[theorem]{Illustration}


\begin{abstract} 
Let $p$ and $q$ be distinct primes. The \textit{semiprime divisor function graph} denoted by $G_{D(pq)}$, is the graph with vertex set $V(G_{D(pq)})=\{1,p,q,pq\}$ and edge set $E(G_{D(pq)})=\{\{1,p\}, \{1,q\},\{1,pq\},\{p,pq\},\{q,pq\}\}$. The semiprime divisor function graph is a special type of divisor function graph $G_{D(n)}$ in which $n=pq$. Recently, the energy and some indices of semiprime divisor function graph have been determined. In this paper, we introduce a natural extension to the semiprime divisor function graph which we call the \textit{$k$-dprime divisor function graph}. Moreover, we present results on some distance-based and degree-based topological indices of $k$-dprime divisor function graph.  We end the paper by giving some open problems. 
\end{abstract} 
{\small{{\bf Keywords:} $k$-dprime divisor function graph, semiprime divisor function graph, divisor function graph, topological indices\\
 {\bf AMS Classification Numbers:} 05C12, 05C50, 05C85}}
\hrule
\vspace{4mm}

\begin{flushleft}
{\footnotesize{{\bf Author Information:}
\vspace{3mm}

\underline{John Rafael M. Antalan}

Assistant Professor

Department of Mathematics and Physics, College of Science, Central Luzon State University (3120), Science City of Mu\~{n}oz, Nueva Ecija, Philippines.

e-mail: jrantalan@clsu.edu.ph 
\bigskip

\underline{Jerwin G. De Leon}

Student

Department of Mathmatics and Physics, College of Science, Central Luzon State University (3120), Science City of Mu\~{n}oz, Nueva Ecija, Philippines. 

e-mail: deleon.jerwin@clsu2.edu.ph 
\bigskip

\underline{Regine P. Dominguez}

Student

Department of Mathmatics and Physics, College of Science, Central Luzon State University (3120), Science City of Mu\~{n}oz, Nueva Ecija, Philippines. 

e-mail: dominguez.regine@clsu2.edu.ph
  
}}
\end{flushleft}

\newpage

\section{Introduction}

One of the developing areas in Graph theory is the notion of using Number theory concepts to define graphs. The said graphs are called \textit{number theoretic based graphs}. One of the most studied number theoretic based graph is the \textit{divisor graph}. Let $S$ be a non-empty subset of $\mathbb{Z}$, a graph $G(V,E)$ is a \textbf{divisor graph} if $V(G)=S$ and $E(G)=\{ij:\mbox{either $i\mid j$ or $j\mid i$ for $i,j\in V(G)$ with $i\neq j$}\}$. The concept of divisor graph was introduced by Singh and Santhosh \cite{Singh} in 2000. Since then, various research studies about divisor graph have been conducted (see \cite{Chart,Frayer,Vinh,Tsao}).
	
	Motivated by the concept of divisor graph, Kannan et al. \cite{Kan} introduced the concept of \textit{divisor function graph} in 2015. Let $n\geq 1$ be an integer, and suppose that $n$ has $r$ divisors $d_1, d_2, \ldots, d_r$, the \textbf{divisor function graph} of $n$ denoted by $G_{D(n)}(V,E)$ is the graph with $V(G_{D(n)})=\{d_1,d_2,\ldots,d_r\}$ and $$E(G_{D(n)})=\{d_id_j:\mbox{either $d_i\mid d_j$ or $d_j\mid d_i$ for $d_i,d_j\in V(G)_{D(n)}$ with $i\neq j$}\}.$$ 
	
	If in the definition of the divisor function graph we have $n=pq$, for distinct primes $p$ and $q$, then $G_{D(n)}$ is called a \textbf{semiprime divisor function graph}. The concept of semiprime divisor function graph was studied recently by Shanmugavelan et al. in \cite{Shan}, and was introduced by Narasimhan et al. \cite{Nara} in 2018. In \cite{Shan},  Shanmugavelan et al. determined the energy and some indices of the semiprime divisor function graph.
	
	Inspired by the work of Shanmugavelan et al., we introduce a natural extension to the semiprime divisor function graph which we call the \textit{$k$-dprime divisor function graph} in this paper. We then determine some distance-based and degree-based topological indices of the $k$-dprime divisor function graph. We also give some problems that the reader might consider as a research study. 

\section{The $k$-dprime Divisor Function Graph}

Unless otherwise stated, we follow the graph theory notations of Bondy and Murty \cite{Bon} and the number theory notations of Burton \cite{Rose}. We now formally define the $k$-dprime divisor function graph. 
	
	\begin{definition}
		Let $n\geq 1$ be an integer such that $n=p_1p_2\ldots p_k$, where each $p_i$ are distinct primes for $i=1,2,\ldots, k$. The graph $G_{D(n)}(V,E)$ with $V(G_{D(n)})=\{u:\mbox{$u\mid n$}\}$ and $$E(G_{D(n)})=\{uv:\mbox{either $u\mid v$ or $v\mid u$ for $u,v\in V(G_{D(n)})$ with $u\neq v$}\}$$
		is called a \textbf{\textit{$\textbf{k}$-dprime divisor function graph}.}
	\end{definition}

    \begin{example}
    The graph of $3$-drpime and $4$-dprime divisor function graph is given in Figure \ref{34dprime}. On the other hand, the graph of $5$-dprime divisor function graph is given in Figure \ref{5dprime}.

	\begin{figure}[!ht]
	\centering
	\includegraphics[width=5cm]{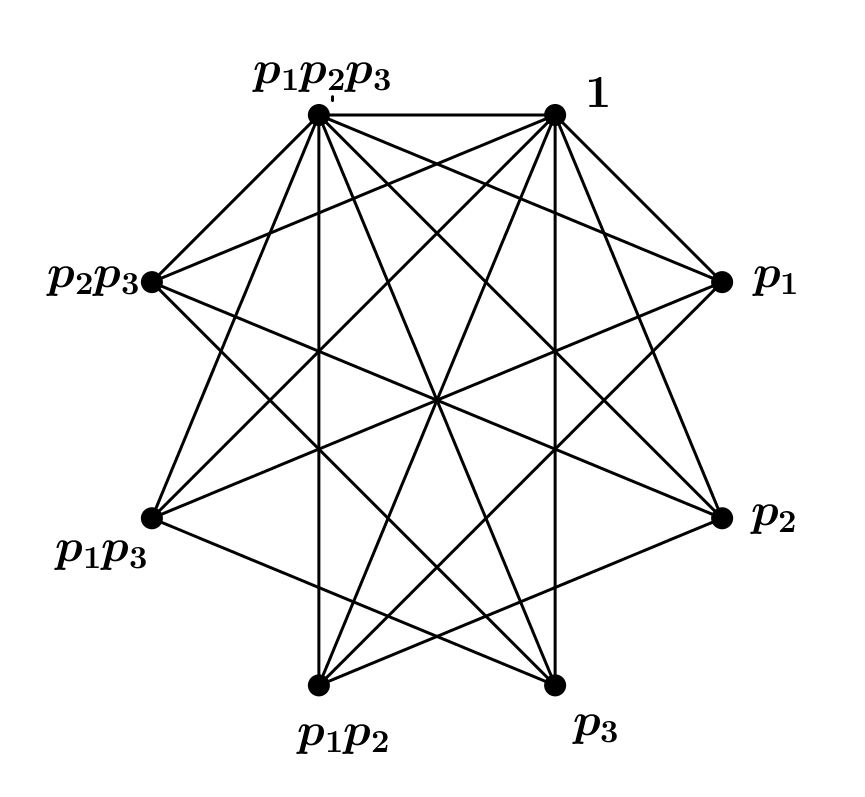}
	\includegraphics[height=5cm]{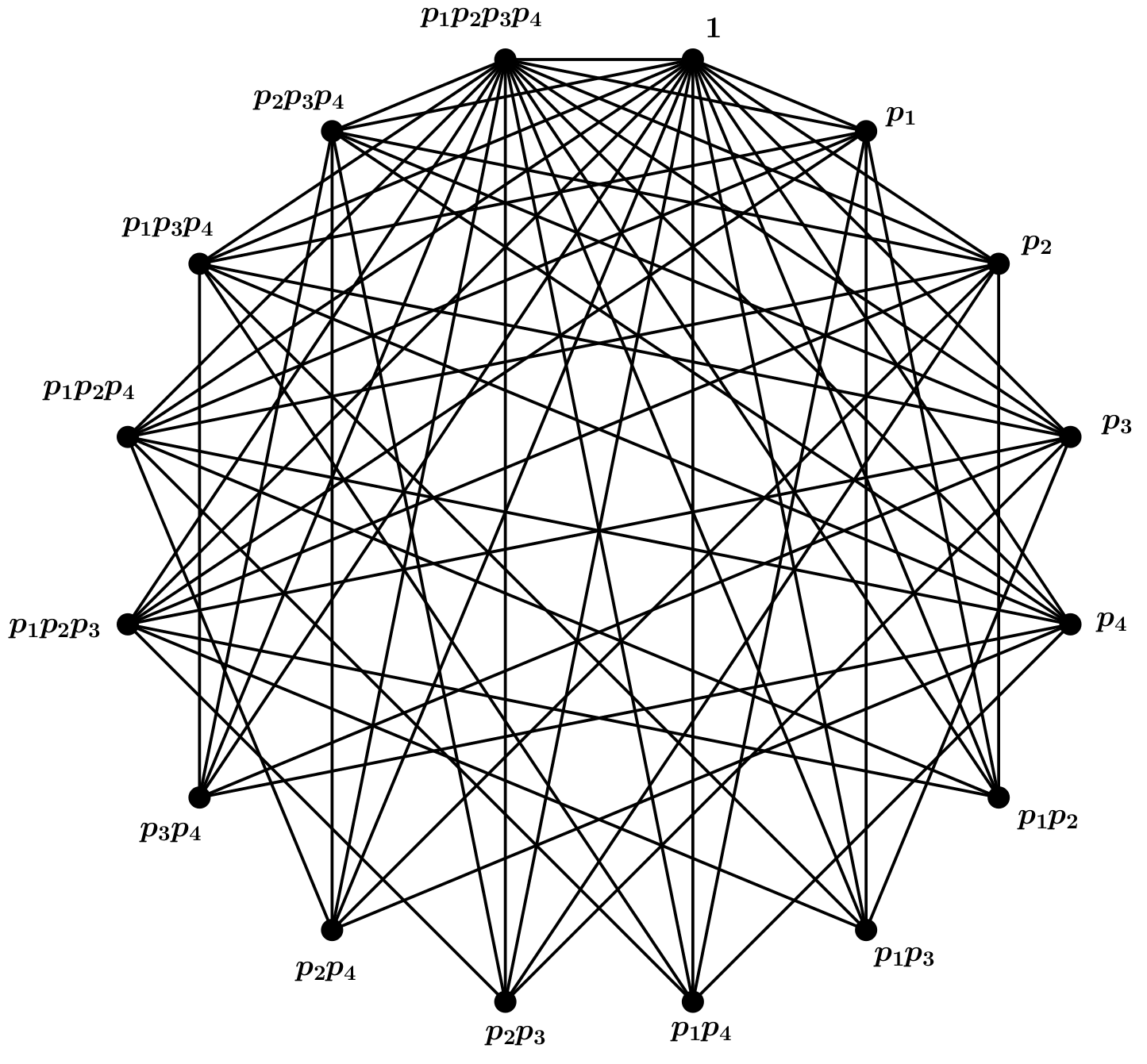}
	\caption{The $3$-dprime and $4$-dprime divisor function graph.}\label{vfig1}
	\label{34dprime}
	\end{figure}
	\begin{figure}[!ht]
	\centering
	\includegraphics[height=8cm]{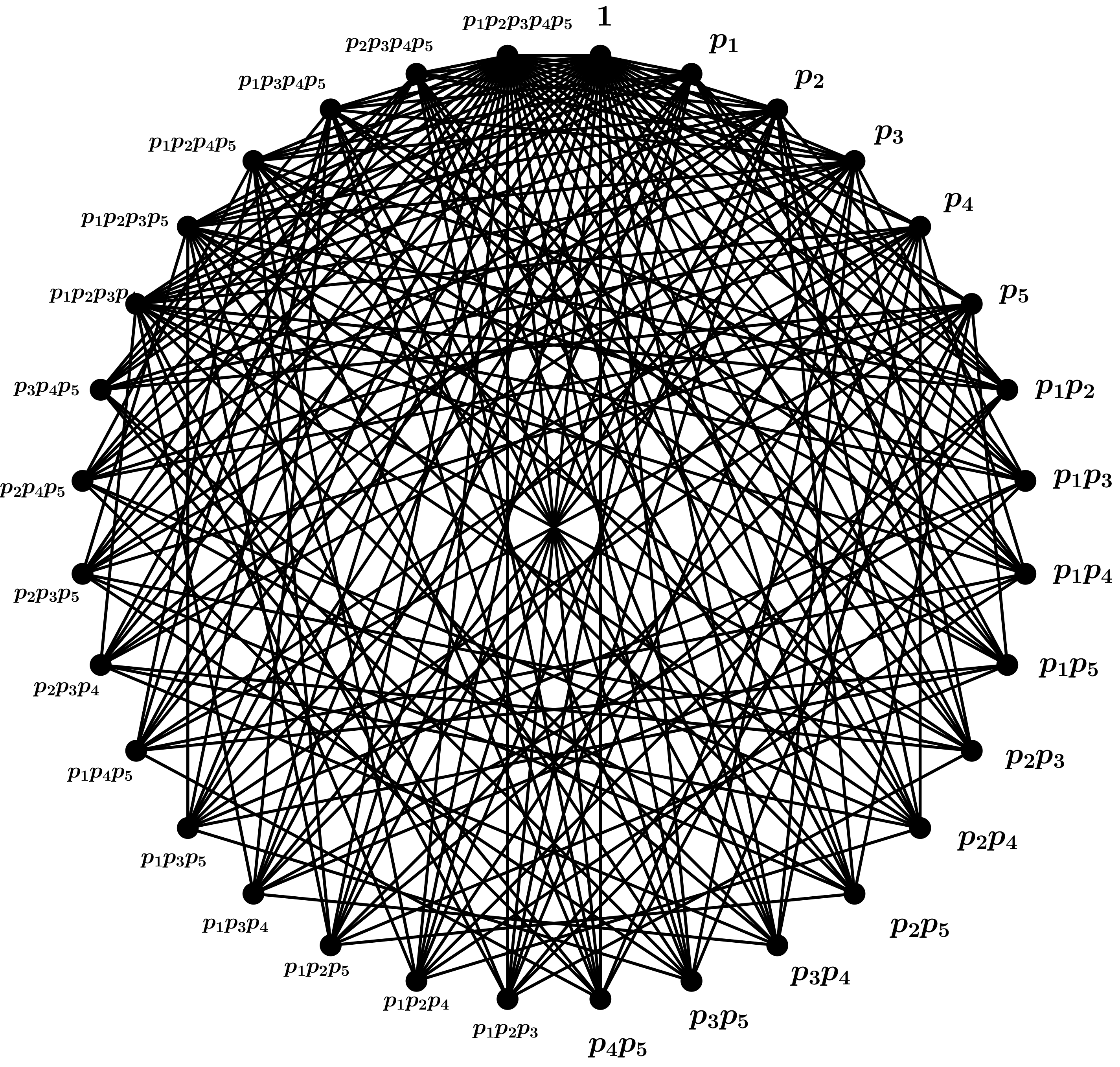}
	\caption{The $5$-dprime divisor function graph.}\label{vfig1}
	\label{5dprime}
	\end{figure}
    
    Observe that the number of vertices in $3$-dprime, $4$-dprime, and $5$-dprime divisor function graph are $8$, $16$, and $32$, respectively. Moreover, the degree sequence of the vertices in $3$-dprime, $4$-dprime, and $5$-dprime divisor function graph are $(7,4,4,4,4,4,4,7)$, $(15,8,8,8,8,6,6,6,6,6,6,8,8,8,8,15)$, and $(31,16,16,16,16,16,10,10,10,10,10,10,10,$\\ $10,10,10,10,10,10,10,10,10,10,10,10,10,16,16,16,16,16,31)$, respectively. Finally, the number of edges in $3$-dprime, $4$-dprime, and $5$-dprime divisor function graph are $19$, $65$, and $211$, respectively.   
    
    \label{sampol1}
    \end{example}

For simplicity, we just denote by $\Gamma_k$ the $k$- dprime divisor function graph $G_{D(n=p_1p_2\ldots p_k)}$. Some of the basic properties of $\Gamma_k$ is given in the next series of results. 

	\begin{theorem}
		The number of vertices in $\Gamma_k$ is $2^k$, that is, the order of $\Gamma_k$ is $2^k$.
		\label{teovernum}
	\end{theorem}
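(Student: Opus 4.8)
The plan is to count the divisors of $n = p_1 p_2 \cdots p_k$ directly, since the vertex set of $\Gamma_k$ is by definition the set of positive divisors of $n$. The key observation is that because the $p_i$ are \emph{distinct} primes, $n$ is squarefree, so every divisor of $n$ is a product of a subset of $\{p_1, p_2, \ldots, p_k\}$ (with the empty product giving the divisor $1$, and the full product giving $n$ itself).

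First I would establish a bijection between $V(\Gamma_k)$ and the power set $\mathcal{P}(\{p_1,\ldots,p_k\})$. Given a subset $S \subseteq \{p_1,\ldots,p_k\}$, map it to the divisor $d_S = \prod_{p_i \in S} p_i$. This map is well-defined since each $d_S$ divides $n$. It is surjective because any divisor $d$ of the squarefree number $n$ must itself be squarefree and composed only of primes among $p_1,\ldots,p_k$, hence equals $d_S$ for $S = \{p_i : p_i \mid d\}$. It is injective by the fundamental theorem of arithmetic (unique factorization): two distinct subsets yield integers with different prime factorizations, hence different integers.

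Next, since a set with $k$ elements has exactly $2^k$ subsets, the bijection gives $|V(\Gamma_k)| = |\mathcal{P}(\{p_1,\ldots,p_k\})| = 2^k$. Equivalently, one can phrase this via the standard multiplicative formula for the number-of-divisors function $\tau$: if $n = p_1^{a_1} \cdots p_k^{a_k}$ then $\tau(n) = \prod_{i=1}^k (a_i + 1)$, and here every $a_i = 1$, so $\tau(n) = 2^k$. I would likely present the subset-counting argument as the main line since it is self-contained and also sets up notation (identifying vertices with subsets) that is convenient for the later edge-counting and degree-sequence results.

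There is essentially no serious obstacle here; the only thing to be careful about is invoking the distinctness of the primes at the right moment — it is precisely what forces $n$ to be squarefree and makes the divisor–subset correspondence a bijection rather than merely a surjection. A proof that omitted this would incorrectly apply to, say, $n = p^2$. So the one "step to get right" is stating clearly that squarefreeness of $n$ (a consequence of the hypothesis that $p_1,\ldots,p_k$ are pairwise distinct) is what is being used.
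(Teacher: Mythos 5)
Your proof is correct and follows essentially the same route as the paper, which simply invokes the divisor-counting formula $\tau(p_1^{e_1}\cdots p_r^{e_r})=(e_1+1)\cdots(e_r+1)$ with every exponent equal to $1$; your subset--divisor bijection is just that formula unpacked in the squarefree case, and your remark about where distinctness of the primes is used is a welcome bit of extra care.
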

	
	\begin{proof}
		The result follows from the composition of vertices in $\Gamma_k$ and the fact that an integer with canonical representation $p_1^{e_1}p_2^{e_2}\ldots p_r^{e_r}$ has $(e_1+1)\cdot(e_2+1)\cdot\ldots\cdot(e_r+1)$ divisors.  
	\end{proof}
	
	\begin{remark}
	The number of vertices in $3$-dprime, $4$-dprime, and $5$-dprime divisor function graph that were obtained in Example \ref{sampol1} agrees with Theorem \ref{teovernum}.  
	\end{remark}

	\begin{theorem}
		If $v\in V(\Gamma_k)$, then  
		
		\begin{equation*}
		    deg_{\Gamma_k}(v)=
		    \begin{cases}
		      2^k-1 & \mbox{if $v=1,n$}\\
		      2^{\omega(v)}+{2}^{k-\omega(v)}-2 & \mbox{otherwise} 
		    \end{cases}
		\end{equation*}
		
		where $\omega(v)$ is the number of distinct prime divisors of $v$. 
    \label{teoverdeg}
	\end{theorem}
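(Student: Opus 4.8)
The plan is to exploit the fact that, since $n=p_1p_2\cdots p_k$ is squarefree, the divisors of $n$ are in bijection with the subsets of $\{p_1,\ldots,p_k\}$, a divisor $u$ corresponding to the set of primes dividing it. Under this correspondence, for two divisors $u$ and $v$ we have $u\mid v$ precisely when the prime set of $u$ is contained in that of $v$; hence $\Gamma_k$ is exactly the comparability graph of the Boolean lattice on $k$ atoms, and the neighbours of a vertex $v$ are precisely the divisors of $n$ that are comparable to $v$ (and distinct from $v$).

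First I would fix a vertex $v$ and set $j=\omega(v)$, so that $v$ is a product of $j$ of the primes $p_1,\ldots,p_k$. I would then split the divisors of $n$ comparable to $v$ into two families: (i) the divisors $d$ of $v$, of which there are $2^{j}$, one for each subset of the $j$ primes dividing $v$; and (ii) the divisors $d$ of $n$ that are multiples of $v$, of which there are $2^{k-j}$, one for each subset of the $k-j$ primes not dividing $v$, multiplied into $v$. Each element of family (i) and each element of family (ii) is adjacent to $v$ or equal to $v$, and, conversely, by the definition of $E(\Gamma_k)$ every neighbour of $v$ lies in (i) or (ii). The only element lying in both families is $v$ itself, since $d\mid v$ and $v\mid d$ force $d=v$. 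Discarding $v$ from each family and applying inclusion--exclusion gives
\[
\deg_{\Gamma_k}(v)=\bigl(2^{j}-1\bigr)+\bigl(2^{k-j}-1\bigr)=2^{\omega(v)}+2^{k-\omega(v)}-2 .
\]

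Finally I would dispose of the boundary vertices $v=1$ and $v=n$. Since every divisor of $n$ is a multiple of $1$ and a divisor of $n$, each of $1$ and $n$ is adjacent to all of the other $2^k-1$ vertices (using Theorem~\ref{teovernum} for the count). Both of these values also agree with the displayed formula, because substituting $\omega(v)=0$ or $\omega(v)=k$ yields $1+2^{k}-2=2^{k}-1$; so the case distinction in the statement is merely a matter of presentation. I do not expect a genuine obstacle here: the only points demanding care are the disjointness claim that justifies the $-2$ correction term (equivalently, that $v$ is not double-counted) and the two $\pm 1$ boundary adjustments in families (i) and (ii).
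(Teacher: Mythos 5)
Your proposal is correct and follows essentially the same route as the paper's own proof: both decompose the neighbourhood of $v$ into the divisors of $v$ and the multiples of $v$ dividing $n$, count each family, and remove the overlap at $v$ itself. The only differences are cosmetic --- you count the multiples directly via the subset bijection where the paper sums binomial coefficients $\sum_{j}\binom{k-\omega(v)}{j}$ and then collapses the sum, and you observe (correctly) that the $v=1,n$ cases are subsumed by the general formula rather than treating them as separate cases as the paper does.
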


	\begin{proof}
	    
	    To prove the theorem, we will consider 3 cases. 
	    
	    \textbf{Case 1: If $v=1$}. If $v=1$, then there are $2^k$ integers in $V(\Gamma_k)$ (including $v$) that are divisible by $v$. By noting that in $E(\Gamma_k)$ we must have $u\neq v$, we conclude that there are $2^k-1$ vertices that are incident to $v$, by considering the number of integers in $V(\Gamma_k)$ that are divisible by $v$. Next, we consider the number of integers in $V(\Gamma_k)$ that divides $v$. The only integer in $V(\Gamma_k)$ that divides $v$ is $v$ itself. By noting that in $E(\Gamma_k)$ we must have $u\neq v$, we conclude that there is no vertex incident to $v$, by considering the number of integers in $V(\Gamma_k)$ that divides $v$. All in all, we have $2^{k}-1$ edges incident to $v$. Hence, $deg_{\Gamma_k}(v)=2^k-1$.
	    
	    \textbf{Case 2: If $v=n$}. If $v=n$, then there is one integer in $V(\Gamma_k)$ that is divisible by $v$, $v$ itself. By noting that in $E(\Gamma_k)$ we must have $u\neq v$, we conclude that there is no vertex incident to $v$, by considering the number of integers in $V(\Gamma_k)$ that are divisible by $v$. Next, we consider the number of integers in $V(\Gamma_k)$ that divides $v$. There are $2^{k}$ integers in $V(\Gamma_k)$ (including $v$) that divides $v$. By noting that in $E(\Gamma_k)$ we must have $u\neq v$, we conclude that there are $2^k-1$ vertices incident to $v$, by considering the number of integers in $V(\Gamma_k)$ that divides $v$. All in all, we have $2^{k}-1$ edges incident to $v$. Hence, $deg_{\Gamma_k}(v)=2^k-1$. 
	    
	    \textbf{Case 3: If $v\in V(\Gamma_k)-\{1,n\}$}. Let $v\in V(\Gamma_k)-\{1,n\}$ and denote by $\omega(v)$ the number of its distinct prime divisors. Note that since $v\mid n$, and $n=p_1p_2\ldots p_k$, we know that $v$ is a product of $\omega(v)$ distinct primes. 
	    
	    Now, let us first count the number of integers in $V(\Gamma_k)$ that divides $v$. Since $v$ is a product of $\omega(v)$ distinct primes, if we use the fact that an integer with canonical representation $p_1^{e_1}p_2^{e_2}\ldots p_r^{e_r}$ has $(e_1+1)\cdot(e_2+1)\cdot\ldots\cdot(e_r+1)$ divisors, we conclude that there are $2^{\omega(v)}$ integers in $V(\Gamma_k)$ that divides $v$. But in $E(\Gamma_k)$ we must have $u\neq v$, so if we consider the number of integers in $V(\Gamma_k)$ that divides $v$, we have $2^{\omega(v)}-1$ edges that are incident to $v$. Before we proceed, we note that the vertices $u$ that are incident to $v$ in this case satisfies the inequality $\omega(u)<\omega(v)$. This is because (i) $u\leq v$ and (ii) if $u\in V(\Gamma_k)$ such that $u\neq v$, and $\omega(u)=\omega(v)$ then $u\nmid v$.    
	    
	    Next, we count the number of integers in $V(\Gamma_k)$ that are divisible by $v$. Note that if $u\in V(\Gamma_k)$ such that $u\neq v$, and $\omega(u)=\omega(v)$ then $v\nmid u$. So we start the counting for integers $u$ with $\omega(u)>\omega(v)$. By \textbf{counting}, there are $\binom{k-\omega(v)}{1}$ integers $u$ in $V(\Gamma_k)$ with $\omega(u)=\omega(v)+1$. Similarly, by counting, we know that there are $\binom{k-\omega(v)}{2}$ integers $u$ in $V(\Gamma_k)$ with $\omega(u)=\omega(v)+2$. In general, for $j=1,2,\ldots, k-\omega(v)$, a counting technique asserts that there are $\binom{k-\omega(v)}{j}$ integers $u$ in $V(\Gamma_k)$ with $\omega(u)=\omega(v)+j$. All in all we have $\displaystyle \sum_{j=1}^{{k-\omega(v)}}\binom{k-\omega(v)}{j}$ number of integers in $V(\Gamma_k)$ that are divisible by $v$ that are not equal to $v$.     
	    
	    Hence, there are $\displaystyle  2^{\omega(v)}-1+\sum_{j=1}^{{k-\omega(v)}}\binom{k-\omega(v)}{j}$ number of vertices that are incident to $v$. If we use the identity $\displaystyle \sum_{j=0}^n{\binom{n}{j}}=2^n$, we conclude that there are $2^{\omega(v)}+{2}^{k-\omega(v)}-2$ incident edges to $v$. Hence, the degree of $v$ is given by $2^{\omega(v)}+{2}^{k-\omega(v)}-2$. 
	    
\end{proof}
	
	\begin{remark}
	The degree of a vertex in $3$-dprime, $4$-dprime, and $5$-dprime divisor function graph that were obtained in Example \ref{sampol1} agrees with Theorem \ref{teoverdeg}.  
	\end{remark}
	
	\begin{remark}
	Let $j=0,1,\ldots, k$. In $\Gamma_k$, there are $\binom{k}{j}$ vertices with $j$ distinct prime divisors.  
	\label{remnum}
	\end{remark}
	
	The next result gives a recursive formula in determining the size of $\Gamma_k$. The formula is dependent on the size of $\Gamma_{k-1}$ and the degree of vertices in $\Gamma_{k-1}$. 
	
	\begin{lemma}
	Let $\Gamma_k$ and $\Gamma_{k-1}$ denote the $k$-prime and $k-1$-dprime divisor function graph respectively. If $\Gamma_{k-1}$ has degree sequence $(deg(v_1), deg(v_2),\ldots, deg(v_{2^{k-1}}))$ arranged in increasing order of number of distinct divisors, then
	\begin{equation*}
	    |E(\Gamma_k)|=|E(\Gamma_{k-1})|+\sum_{i=1}^{2^{k-1}}{\left(deg(v_i)+1\right)}.
	\end{equation*}
	\label{lemedge}
	\end{lemma}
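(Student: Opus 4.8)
The plan is to realise $\Gamma_k$ as $\Gamma_{k-1}$ together with $2^{k-1}$ extra vertices added one at a time. Write $n=p_1\cdots p_k$ and $n'=p_1\cdots p_{k-1}$, so that $\Gamma_{k-1}=G_{D(n')}$. Since $n$ is squarefree, every vertex of $\Gamma_k$ either divides $n'$ or has the form $p_k v$ for a unique divisor $v$ of $n'$; let $A$ be the set of the former (``old'') vertices and $B$ the set of the latter (``new'') ones. The subgraph of $\Gamma_k$ induced on $A$ is exactly $\Gamma_{k-1}$, and $|A|=|B|=2^{k-1}$ by Theorem~\ref{teovernum}. Partitioning the edges of $\Gamma_k$ into those with both ends in $A$ and those meeting $B$, we get $|E(\Gamma_k)|=|E(\Gamma_{k-1})|+N$, where $N$ is the number of edges of $\Gamma_k$ incident with at least one vertex of $B$, so it remains to show $N=\sum_{i=1}^{2^{k-1}}\bigl(deg(v_i)+1\bigr)$.

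To compute $N$, I would insert the new vertices $p_kv$ into $\Gamma_{k-1}$ one at a time, in \emph{decreasing} order of the number of distinct prime divisors of $v$ (that is, in the reverse of the listing $v_1,\dots,v_{2^{k-1}}$ of the statement), and count the edges created at each insertion. When $p_kv_i$ is inserted, an old vertex $u$ (so $u\mid n'$) is joined to it iff $u\mid p_kv_i$ or $p_kv_i\mid u$; the latter is impossible because $p_k\nmid u$, and the former is equivalent to $u\mid v_i$, so $p_kv_i$ gains an edge to each of the $2^{\omega(v_i)}$ divisors of $v_i$. A previously inserted new vertex $p_kv_j$ is joined to $p_kv_i$ iff $v_j\mid v_i$ or $v_i\mid v_j$; by the chosen order $\omega(v_j)\ge\omega(v_i)$, which forces $v_i\mid v_j$ and $v_i\neq v_j$, and conversely every proper multiple of $v_i$ dividing $n'$ has larger $\omega$ and so has already been inserted. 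Hence inserting $p_kv_i$ creates exactly $\#\{u:u\mid v_i\}+\#\{u:v_i\mid u\mid n',\,u\neq v_i\}$ edges.

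The final observation is that this count equals $deg_{\Gamma_{k-1}}(v_i)+1$: in $\Gamma_{k-1}$ the neighbours of $v_i$ are precisely its proper divisors together with its proper multiples among the divisors of $n'$, and these two families are disjoint, so $deg_{\Gamma_{k-1}}(v_i)$ is the number of proper divisors of $v_i$ plus the number of proper multiples of $v_i$ dividing $n'$; adding $1$ for $v_i$ itself turns ``proper divisors'' into ``all divisors'', which is exactly the count above (and this also covers the extreme cases $v_i=1$ and $v_i=n'$ without separate treatment). Summing over $i$ gives $N=\sum_{i=1}^{2^{k-1}}\bigl(deg(v_i)+1\bigr)$ and hence the claimed recursion. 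The one point requiring care is the middle step: one must verify that when $p_kv_i$ is inserted, all of its prospective neighbours in $B$—and only those—have already been placed, which is precisely why the vertices are processed in decreasing order of $\omega$; the ordering named in the lemma is immaterial to the value of the final sum. (Alternatively one can bypass the incremental picture: $A$ and $B$ each span a copy of $\Gamma_{k-1}$, and counting the $A$--$B$ edges as ordered divisibility pairs of divisors of $n'$ gives $|E(A,B)|=2^{k-1}+|E(\Gamma_{k-1})|$, whence $|E(\Gamma_k)|=3|E(\Gamma_{k-1})|+2^{k-1}$, which rearranges to the stated formula using the handshake identity $\sum_i deg(v_i)=2|E(\Gamma_{k-1})|$ and Theorem~\ref{teovernum}.)
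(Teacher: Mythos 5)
Your proof is correct and follows essentially the same route as the paper's: both split $V(\Gamma_k)$ into the divisors of $p_1\cdots p_{k-1}$ and the new vertices $p_kv$, and show that each new vertex accounts for exactly $2^{\omega(v)}+2^{(k-1)-\omega(v)}-1=deg_{\Gamma_{k-1}}(v)+1$ edges by pairing it with the divisors of $v$ on the old side and the proper multiples of $v$ on the new side, your insertion-order device playing the same role as the paper's one-directional count in avoiding duplication of new--new edges. Your parenthetical closing remark (that $|E(\Gamma_k)|=3|E(\Gamma_{k-1})|+2^{k-1}$ by double-counting the cross edges) is a nice, slicker reformulation, but the main argument is the paper's.
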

	
	\begin{proof}
	
	First, note that $\Gamma_{k-1}$ is a subgraph of $\Gamma_k$. So, all the edges in $\Gamma_{k-1}$ also belong to $\Gamma_k$. Also, observe that $V(\Gamma_k)=V(\Gamma_{k-1})\cup \{vp_k:v\in V(\Gamma_{k-1})\}$. This means that in order to determine the number of edges of $\Gamma_k$, it is enough to consider the number of edges contributed by the vertices in $\{vp_k:v\in V(\Gamma_{k-1})\}$ and add it to $|E(\Gamma_{k-1})|$.    
	
	We claim that if $u\in \{vp_k:v\in V(\Gamma_{k-1})\}$ then $u=vp_k$ contributes $deg(v)+1$ edges in the graph $\Gamma_k$. To prove our claim, we proceed by counting the number of edges contributed by $u$ in $\Gamma_k$ avoiding duplication, which is equal to the number of integers in $V(\Gamma_{k-1})$ that divides $u$ added by the number of integers in $\{vp_k:v\in V(\Gamma_{k-1})\}$ that are divisible by $u$.      
	
	The number of integers in $V(\Gamma_{k-1})$ dividing $u$ is equal to the number of integers in $V(\Gamma_{k-1})$ that divides $v$ plus one (since $v\mid u$). So, we have $2^{\omega(v)}$ integers dividing $u$ in $V(\Gamma_{k-1})$. On the other hand, the number of integers in $\{vp_k:v\in V(\Gamma_{k-1})\}$ that are divisible by $u$ is equal to the number of integers divisible by $v$ in $V(\Gamma_{k-1})$ which is $2^{(k-1)-\omega(v)}-1$. All in all, $u$ contributes a total of $2^{\omega(v)}+2^{(k-1)-\omega(v)}-1=deg(v)+1$.    
	
	Using the just proved claim, we conclude that there are a total of $\displaystyle \sum_{i=1}^{2^{k-1}}{\left(deg(v_i)+1\right)}$ edges contributed by the vertices in the set $\{vp_k:v\in V(\Gamma_{k-1})\}$ in the graph $\Gamma_k$. If we add that sum to $|E(\Gamma_{k-1})|$ we have $|E(\Gamma_k)|$.    
	\end{proof}

A formula on how to compute for $|E(\Gamma_k)|$ using only the variable $k$ is given in the next theorem.

    \begin{theorem}
    The graph $\Gamma_k$ has $3^k-2^k$ number of edges. That is, the size of $\Gamma_k$ is $3^k-2^k$.
    \label{teonumedge}
    \end{theorem}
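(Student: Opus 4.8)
The plan is to extract a closed form from the recursion already established in Lemma~\ref{lemedge} rather than recounting edges from scratch. The first step is to simplify the sum appearing there. Since the degrees $deg(v_i)$ in Lemma~\ref{lemedge} are taken \emph{inside} $\Gamma_{k-1}$, the handshake lemma applied to $\Gamma_{k-1}$ gives $\sum_{i=1}^{2^{k-1}} deg(v_i) = 2\,|E(\Gamma_{k-1})|$, while $\sum_{i=1}^{2^{k-1}} 1 = 2^{k-1}$ because $\Gamma_{k-1}$ has $2^{k-1}$ vertices by Theorem~\ref{teovernum}. Substituting into Lemma~\ref{lemedge} collapses it to the first-order linear recurrence
\[
|E(\Gamma_k)| = 3\,|E(\Gamma_{k-1})| + 2^{k-1}.
\]

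Next I would fix the base case and induct. For $k=1$ the graph $\Gamma_1$ has vertex set $\{1,p_1\}$ and the single edge $\{1,p_1\}$, so $|E(\Gamma_1)| = 1 = 3^1 - 2^1$. Assuming inductively that $|E(\Gamma_{k-1})| = 3^{k-1} - 2^{k-1}$ for some $k\ge 2$, the recurrence yields $3(3^{k-1}-2^{k-1}) + 2^{k-1} = 3^k - 3\cdot 2^{k-1} + 2^{k-1} = 3^k - 2\cdot 2^{k-1} = 3^k - 2^k$, completing the induction. (Equivalently one can solve the recurrence outright: the homogeneous part contributes $C\cdot 3^k$, a particular solution for the forcing term $2^{k-1}$ is $-2^k$, and the initial value forces $C=1$.)

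As an independent consistency check — and an alternative, self-contained route — I would also note that the claim falls out of a degree-sum computation using Theorem~\ref{teoverdeg} and Remark~\ref{remnum}. There are $\binom{k}{j}$ vertices with exactly $j$ distinct prime divisors; such a vertex has degree $2^j + 2^{k-j} - 2$ when $1\le j\le k-1$ and degree $2^k-1$ when $j\in\{0,k\}$. Hence
\[
2\,|E(\Gamma_k)| = 2\bigl(2^k-1\bigr) + \sum_{j=1}^{k-1}\binom{k}{j}\bigl(2^j + 2^{k-j} - 2\bigr),
\]
and expanding each term via $\sum_{j=0}^{k}\binom{k}{j}2^j = 3^k$ (using the symmetry $j\leftrightarrow k-j$ on the middle piece) reduces the right-hand side to $2\cdot 3^k - 2^{k+1}$, so $|E(\Gamma_k)| = 3^k - 2^k$.

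There is no real obstacle in either route; the computations are routine. The only points requiring a little care are (i) remembering that the degrees in Lemma~\ref{lemedge} live in $\Gamma_{k-1}$, so that $\sum(deg(v_i)+1) = 2|E(\Gamma_{k-1})| + 2^{k-1}$, and (ii) in the alternative argument, keeping the exceptional vertices $1$ and $n$ outside the binomial sum so the symmetry and the binomial identity are applied correctly.
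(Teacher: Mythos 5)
Your proposal is correct, and both of your routes lead to the right answer; but neither is verbatim the paper's argument, so a comparison is worthwhile. The paper also starts from Lemma~\ref{lemedge}, but it evaluates the sum $\sum_{i}(deg(v_i)+1)$ by substituting the explicit degree formula from Theorem~\ref{teoverdeg} together with Remark~\ref{remnum}, then collapsing $\sum_{j=0}^{k-1}\binom{k-1}{j}(2^{k-1-j}+2^j-1)$ via the binomial identities $\sum_j\binom{n}{j}=2^n$ and $\sum_j\binom{n}{j}2^j=3^n$; this produces the recurrence $|E(\Gamma_k)|=|E(\Gamma_{k-1})|+2\cdot 3^{k-1}-2^{k-1}$, which it then telescopes from $|E(\Gamma_0)|=0$. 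Your first route replaces all of that machinery with a single application of the handshake lemma, $\sum_i deg(v_i)=2|E(\Gamma_{k-1})|$, arriving at the equivalent but cleaner recurrence $|E(\Gamma_k)|=3|E(\Gamma_{k-1})|+2^{k-1}$; this buys independence from Theorem~\ref{teoverdeg} and from any binomial manipulation, at the cost of a recurrence whose homogeneous part is $3^k$ rather than a pure telescoping sum (either is trivial to solve, and your induction and your particular-solution check are both correct). Your second route --- summing degrees directly in $\Gamma_k$ via Theorem~\ref{teoverdeg} and Remark~\ref{remnum} --- is essentially the computation the paper performs, but applied once to $\Gamma_k$ itself instead of to the increment from $\Gamma_{k-1}$; it is self-contained and bypasses Lemma~\ref{lemedge} entirely, which is arguably the most economical proof of the three. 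Your two cautionary remarks (that the degrees in Lemma~\ref{lemedge} are taken in $\Gamma_{k-1}$, and that the vertices $1$ and $n$ must be handled separately in the degree sum) are exactly the right places where care is needed, and your arithmetic checks out in both routes.
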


\begin{proof}
If we combine Lemma \ref{lemedge} with Theorem \ref{teoverdeg} and Remark \ref{remnum} we get 

\begin{equation*}
|E(\Gamma_k)|=|E(\Gamma_{k-1})|+\sum_{j=0}^{k-1}\binom{k-1}{j}(2^{k-j-1}+2^j-1).
\end{equation*}

Now, we wish to simplify the expression $\displaystyle \sum_{j=0}^{k-1}\binom{k-1}{j}(2^{k-j-1}+2^j-1)$ in the above equation by using the identities  $\displaystyle \sum_{j=0}^n{\binom{n}{j}}=2^n$ and  $\displaystyle \sum_{j=0}^n{\binom{n}{j}2^j}=3^n$. By simplifying, we have

\begin{align*}
\sum_{j=0}^{k-1}\binom{k-1}{j}(2^{k-j-1}+2^j-1)&=\sum_{j=0}^{k-1}\binom{k-1}{j}(2^{k-1-j})+\sum_{j=0}^{k-1}\binom{k-1}{j}(2^j)-\sum_{j=0}^{k-1}\binom{k-1}{j}\\
&=3^{k-1}+3^{k-1}-2^{k-1}\\
&=2(3^{k-1})-2^{k-1}.
\end{align*}  
Thus, we now have 

\begin{equation}
|E(\Gamma_k)|=|E(\Gamma_{k-1})|+2(3^{k-1})-2^{k-1}.
\label{eqnE}
\end{equation} 

We note that the $0$-dprime divisor function graph has $|E(\Gamma_0)|=0$. So, solving the recurrence relation in Equation \eqref{eqnE} with the initial condition $|E(\Gamma_0)|=0$ gives

\begin{align*}
|E(\Gamma_k)|&=|E(\Gamma_0)|+2\sum_{j=0}^{k-1}{3^j}-\sum_{j=0}^{k-1}{2^j}\\
&=|E(\Gamma_0)|+2\left(\frac{3^k-1}{2}\right)-(2^k-1)\\
&=0+3^k-1-2^k+1\\
&=3^k-2^k.
\end{align*}

\end{proof}
    
    \begin{remark}
    Using Theorem \ref{teonumedge}, one can verify that $\Gamma_3$ has $3^3-2^3=19$ number of edges. Also, $\Gamma_4$ has $3^4-2^4=65$ number of edges. Finally, $\Gamma_5$ has $3^5-2^5=211$  number of edges. The results agree with the results stated in Example \ref{sampol1}. 
    \end{remark}

We now end the section by presenting some results about distance between vertices in $k$-dprime divisor function graph.

\begin{lemma}
	Let $\Gamma_k$ denote the $k$-dprime divisor function graph. If $u,v\in V(\Gamma_k)$ then
	\begin{equation*}
	d_{\Gamma_k}(u,v)=
	\begin{cases}
	0 &  \mbox{if $u=v$}\\
	1 & \mbox{if $u$ is adjacent to $v$}\\
	2 & \mbox{otherwise.}
	\end{cases}
	\end{equation*}
	\label{lemdis}
	\end{lemma}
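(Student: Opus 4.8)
The plan is to dispose of the three cases in turn, noting that only the last one requires any work. If $u=v$ then $d_{\Gamma_k}(u,v)=0$ directly from the definition of graph distance, and if $u$ is adjacent to $v$ then $d_{\Gamma_k}(u,v)=1$, again by definition. So the whole content of the lemma is the claim that any two distinct, non-adjacent vertices $u,v$ satisfy $d_{\Gamma_k}(u,v)=2$. Since such a pair is non-adjacent we automatically have $d_{\Gamma_k}(u,v)\ge 2$, so it suffices to produce a walk of length $2$ joining $u$ and $v$, i.e.\ a common neighbour.

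The key observation is that $\Gamma_k$ has two \emph{universal} vertices, namely $1$ and $n=p_1p_2\cdots p_k$. Indeed, $1\mid w$ for every $w\in V(\Gamma_k)$, so the vertex $1$ is adjacent to all other vertices; and $w\mid n$ for every $w\in V(\Gamma_k)$, so the vertex $n$ is adjacent to all other vertices. (This is consistent with Theorem~\ref{teoverdeg}: $1$ and $n$ are exactly the vertices of degree $2^k-1$.) In particular, if $u$ or $v$ equals $1$ or $n$, then $u$ and $v$ are adjacent; hence a non-adjacent pair $u,v$ must have both $u$ and $v$ lying in $V(\Gamma_k)\setminus\{1,n\}$.

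For such a pair, the vertex $1$ is adjacent to both $u$ and $v$, so the path $u,\,1,\,v$ (equivalently, the path $u,\,n,\,v$) has length $2$, giving $d_{\Gamma_k}(u,v)\le 2$. Together with $d_{\Gamma_k}(u,v)\ge 2$ this forces $d_{\Gamma_k}(u,v)=2$, completing the proof. As a by-product the argument shows $\Gamma_k$ is connected of diameter at most $2$, and in fact of diameter exactly $2$ once $k\ge 2$, since e.g.\ $p_1$ and $p_2$ are non-adjacent.

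There is essentially no obstacle here: the entire proof rests on recognizing $1$ and $n$ as universal vertices, and the only point requiring a little care is the bookkeeping that the ``otherwise'' branch genuinely excludes $1$ and $n$, so that the common-neighbour argument is applicable to every non-adjacent pair.
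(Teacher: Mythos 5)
Your proof is correct and follows the same route as the paper's: both rest on observing that $1$ (and likewise $n$) is adjacent to every other vertex, so any non-adjacent pair $u,v$ is joined by the length-$2$ path $u,\,1,\,v$. Your version is somewhat more careful than the paper's, which states the path $u\to 1\to v$ without explicitly checking that $1$ is a universal vertex or that a non-adjacent pair must avoid $\{1,n\}$, but the underlying argument is identical.
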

	
	\begin{proof}
	    Clearly, $d_{\Gamma_k}(u,v)=0$ if $u=v$ and $d_{\Gamma_k}(u,v)=1$ if $u$ is adjacent to $v$. Now, if $u$ is not adjacent to $v$, then the path $u\to 1\to v$ is a shortest path from $u$ to $v$. Another shortest path from $u$ to $v$ is the path $u\to n\to v$. Hence, $d_{\Gamma_k}(u,v)=2$, if $u$ is not adjacent to $v$.  
	\end{proof}

	\begin{corollary}
	Let $\Gamma_k$ denote the $k$-dprime divisor function graph. The diameter of $\Gamma_k$ denoted by $diam(\Gamma_k)$ is $2$. 
	\end{corollary}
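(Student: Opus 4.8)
The plan is to read this off directly from Lemma \ref{lemdis}. Recall that the diameter of a connected graph is the maximum, over all ordered pairs of vertices $u,v$, of the distance $d(u,v)$; by the corollary to Lemma \ref{lemdis} (implicitly) $\Gamma_k$ is connected, since every pair of vertices is joined either by an edge or by a path through the vertex $1$. Lemma \ref{lemdis} tells us that every distance $d_{\Gamma_k}(u,v)$ is one of $0$, $1$, or $2$, so immediately $diam(\Gamma_k)\le 2$.

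For the reverse inequality it suffices to exhibit a single pair of distinct, non-adjacent vertices, since by Lemma \ref{lemdis} such a pair is at distance exactly $2$. For $k\ge 2$ the prime divisors $p_1$ and $p_2$ serve: they are distinct vertices of $\Gamma_k$, and since neither $p_1\mid p_2$ nor $p_2\mid p_1$, there is no edge between them, so $d_{\Gamma_k}(p_1,p_2)=2$. Alternatively one could invoke Theorem \ref{teoverdeg}: for $k\ge 2$ there is a vertex of degree $2^{\omega(v)}+2^{k-\omega(v)}-2<2^k-1=|V(\Gamma_k)|-1$, so $\Gamma_k$ is not complete and hence has diameter at least $2$. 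Either way, combining with the bound from the previous paragraph yields $diam(\Gamma_k)=2$.

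I do not expect any genuine obstacle; the only point that needs a word of care is the range of $k$. For $k=0$ the graph is a single vertex (diameter $0$) and for $k=1$ it is $K_2$ (diameter $1$), so the statement is understood to hold for $k\ge 2$, which is consistent with the examples treated in the paper. If the authors wish to be fully explicit, the hypothesis $k\ge 2$ can simply be added to the statement.
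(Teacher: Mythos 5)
Your proof is correct and follows the route the paper intends: the corollary is stated without proof as an immediate consequence of Lemma \ref{lemdis}, and you have simply made explicit the two halves (the upper bound $diam(\Gamma_k)\le 2$ from the lemma, and the lower bound via a non-adjacent pair such as $p_1,p_2$). Your observation that the statement really requires $k\ge 2$ is a genuine and worthwhile refinement that the paper glosses over.
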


\section{Some Indices of the $k$-dprime Divisor Function Graph}

Given a family of graphs $\cal{G}$, a \textbf{topological index} is a function $Top:\cal{G}\to \mathbb{R}$ such that if $\Gamma_1,\Gamma_2\in \cal{G}$, and $\Gamma_1\cong \Gamma_2$ then $Top(\Gamma_1)=Top(\Gamma_2)$. In this section, we give some general results about the following distance-based and degree-based topological indices of the $k$-dprime divisor function graph $\Gamma_k$

\begin{equation*}
\mbox{\bf Wiener Index:}\ W(\Gamma_k)=\sum_{\{u,v\}\subseteq V(\Gamma_k)}^{}{d_{\Gamma_k}(u,v)}     
\end{equation*}
 
\begin{equation*}
\mbox{\bf Hyper-Wiener Index:}\ WW(\Gamma_k)=\frac{1}{2}\sum_{\{u,v\}\subseteq V(\Gamma_k)}^{}{[d_{\Gamma_k}(u,v)+{(d_{\Gamma_k}(u,v)})^{2}]}  
\end{equation*}

\begin{equation*}
\mbox{\bf Harary Index:}\ H(\Gamma_k)=\sum_{\{u,v\}\subseteq V(\Gamma_k)}^{}{\frac{1}{d_{\Gamma_k}(u,v)}}     
\end{equation*}

\begin{equation*}
\mbox{\bf First Zagreb Index:}\ M_1(\Gamma_k)={M_1}(\Gamma_k)=\sum_{u\in V(\Gamma_k)}^{}{({deg_{\Gamma_k}(u)})^{2}}.
\end{equation*}
 
To effectively calculate the first three indices, we need to recall the concept of graph's \textit{distance matrix} as well as its variants, the \textit{square distance matrix} and the \textit{reciprocal distance matrix}. The \textbf{distance matrix} of a graph $G$ of order $|V(G)|$, denoted by \textbf{D}$(G)$ is the $|V(G)|\times |V(G)|$ matrix \textbf{D} with entries $[d_{ij}]=d_{G}(v_i,v_j)$. On the other hand, the \textbf{square distance matrix} of a graph $G$, denoted by \textbf{D}$^2(G)$ is the matrix with $ij$-entry equal to $(d_{G}(v_i,v_j))^2$. Lastly, the \textbf{reciprocal distance matrix} of a graph $G$, denoted by \textbf{D}$^{-1}(G)$ is the matrix with $ij$-entry equal to $\frac{1}{d_{G}(v_i,v_j)}$. Once the distance matrix of a graph and its variants have been determined, the Wiener, hyper-Wiener, and Harary index of the gaph can be easily calculated as shown in the next example.

\begin{example}
It follows from the graph of the $3$-dprime divisor function graph $\Gamma_3$ in Example \ref{sampol1} that 
\begin{equation*}
\mbox{\textbf{D}}(\Gamma_3)=
\begin{bmatrix}
	0&1&1&1&1&1&1&1 \\
	1&0&2&2&1&1&2&1 \\
	1&2&0&2&1&2&1&1 \\
	1&2&2&0&2&1&1&1 \\
	1&1&1&2&0&2&2&1 \\
	1&1&2&1&2&0&2&1 \\
	1&2&1&1&2&2&0&1 \\
	1&1&1&1&1&1&1&0
	\end{bmatrix}
\end{equation*}
where the matrix is indexed by the ordered set $\{1,p_1,p_2,p_3,p_1p_2,p_1p_3,p_2p_3,p_1p_2p_3\}$. If we use the definition of the Wiener index, we have

\begin{align*}
W(\Gamma_3)&=\sum_{\{u,v\}\subseteq V(\Gamma_3)}^{}{d_{\Gamma_3}(u,v)}\\
&=\sum_{v\in V(\Gamma_3)}^{}{d_{\Gamma_3}(1,v)}+\sum_{v\in V(\Gamma_3)-\{1\}}^{}{d_{\Gamma_3}(p_1,v)}+\ldots +\sum_{v\in V(\Gamma_3)-\{1,p_1,p_2,\ldots, p_2p_3\}}^{}{d_{\Gamma_3}(p_1p_2p_3,v)}\\
&=\frac{1}{2}\sum_{1\leq i,j\leq |V(\Gamma_3)|}{[d_{ij}]}\\
&=\frac{74}{2}\\
&=37.
\end{align*}

In a similar manner, one can show that 

\begin{equation*}
\sum_{\{u,v\}\subseteq V(\Gamma_3)}^{}{(d_{\Gamma_3}(u,v))^2}=\frac{1}{2}\sum_{1\leq i,j\leq |V(\Gamma_3)|}{[d_{ij}]^2}
\end{equation*}

and

\begin{equation*}
\sum_{\{u,v\}\subseteq V(\Gamma_3)}^{}{\frac{1}{d_{\Gamma_3}(u,v)}}=\frac{1}{2}\sum_{1\leq i,j\leq |V(\Gamma_3)|}{\frac{1}{[d_{ij}]}}. 
\end{equation*}

So, by knowing the matrices 

 \begin{equation*}
\mbox{\textbf{D}}^2(\Gamma_3)=
\begin{bmatrix}
	0&1&1&1&1&1&1&1 \\
	1&0&4&4&1&1&4&1 \\
	1&4&0&4&1&4&1&1 \\
	1&4&4&0&4&1&1&1 \\
	1&1&1&4&0&4&4&1 \\
	1&1&4&1&4&0&4&1 \\
	1&4&1&1&4&4&0&1 \\
	1&1&1&1&1&1&1&0
	\end{bmatrix}
\end{equation*}

and 

\begin{equation*}
\mbox{\textbf{D}}^{-1}(\Gamma_3)=
\begin{bmatrix}
	0&1&1&1&1&1&1&1 \\
	1&0&1/2&1/2&1&1&1/2&1 \\
	1&1/2&0&1/2&1&1/2&1&1 \\
	1&1/2&1/2&0&1/2&1&1&1 \\
	1&1&1&1/2&0&1/2&1/2&1 \\
	1&1&1/2&1&1/2&0&1/2&1 \\
	1&1/2&1&1&1/2&1/2&0&1 \\
	1&1&1&1&1&1&1&0
	\end{bmatrix},
\end{equation*}

we can easily compute the hyper-Wiener index and the Harary index of the $3$-dprime divisor function graph as shown in the next page.

\begin{align*}
WW(\Gamma_3)&=\frac{1}{2}\sum_{\{u,v\}\subseteq V(\Gamma_3)}^{}{[d_{\Gamma_3}(u,v)+{(d_{\Gamma_3}(u,v)})^{2}]}\\
&=\frac{1}{2}\left[\sum_{\{u,v\}\subseteq V(\Gamma_3)}^{}{d_{\Gamma_3}(u,v)}+\sum_{\{u,v\}\subseteq V(\Gamma_3)}^{}{(d_{\Gamma_3}(u,v))^2}\right]\\
&=\frac{1}{2}\left[W(\Gamma_3)+\frac{1}{2}\sum_{1\leq i,j\leq |V(\Gamma_3)|}{[d_{ij}]^2}\right]\\
&=\frac{1}{2}(37+55)\\
&=46.
\end{align*} 

\begin{align*}
H(\Gamma_3)&=\sum_{\{u,v\}\subseteq V(\Gamma_3)}^{}{\frac{1}{d_{\Gamma_3}(u,v)}}\\
&=\frac{1}{2}\sum_{1\leq i,j\leq |V(\Gamma_3)|}{\frac{1}{[d_{ij}]}}\\
&=\frac{1}{2}(47)\\
&=23.5.
\end{align*}
\label{sampol2}   
\end{example}

\begin{remark}
In general, given a connected graph $G$, the value of $W(G)$ can be computed by adding all the entries in \textbf{D}$(G)$ and then dividing the result by $2$. For the Harary index, it can be computed by adding all the entries in \textbf{D}$^{-1}(G)$ and then dividing the result by $2$. Finally, for the hyper-Wiener index, it can be calculated by adding half of $W(G)$ to quarter of the sum of all the entries in \textbf{D}$^{2}(G)$.   
\label{remdbind}
\end{remark}

Before we present the general results, we emphasize that the sum of all the vertex degrees in a graph $G$ is equal to $2|E(G)|$. Now, we present the general results.

\begin{theorem}
Let $\Gamma_k$ denote the $k$-dprime divisor function graph. The Wiener index of $\Gamma_k$, denoted by $W(\Gamma_k)$ is given by
	\begin{equation*}
	W(\Gamma_k)=2^{2k}-3^k.
	\end{equation*}
	\label{teoW}
\end{theorem}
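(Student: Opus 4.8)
The plan is to use the distance formula from Lemma \ref{lemdis} to reduce the Wiener index to a count of ordered (or unordered) vertex pairs weighted by $1$ or $2$, and then to express everything in terms of quantities already computed: the order $2^k$ (Theorem \ref{teovernum}) and the size $3^k-2^k$ (Theorem \ref{teonumedge}). Concretely, since $d_{\Gamma_k}(u,v)\in\{1,2\}$ for distinct $u,v$, every unordered pair of distinct vertices contributes $1$ if the pair is an edge and $2$ otherwise. Hence
\begin{equation*}
W(\Gamma_k)=1\cdot|E(\Gamma_k)|+2\left(\binom{2^k}{2}-|E(\Gamma_k)|\right)=2\binom{2^k}{2}-|E(\Gamma_k)|.
\end{equation*}

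Next I would substitute the known values. We have $\binom{2^k}{2}=\dfrac{2^k(2^k-1)}{2}=2^{2k-1}-2^{k-1}$, so $2\binom{2^k}{2}=2^{2k}-2^k$, and $|E(\Gamma_k)|=3^k-2^k$ by Theorem \ref{teonumedge}. Therefore
\begin{equation*}
W(\Gamma_k)=\left(2^{2k}-2^k\right)-\left(3^k-2^k\right)=2^{2k}-3^k,
\end{equation*}
which is the claimed formula. As a sanity check one can verify $W(\Gamma_3)=2^6-3^3=64-27=37$, matching Example \ref{sampol2}.

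I do not anticipate a genuine obstacle here: the only thing to be careful about is the bookkeeping of ordered versus unordered pairs (equivalently, the factor of $\tfrac12$ noted in Remark \ref{remdbind}) and making sure the ``otherwise'' case of Lemma \ref{lemdis} really covers all non-adjacent distinct pairs — which it does, since the diameter is $2$. An alternative but essentially equivalent route, mirroring the matrix computation in Example \ref{sampol2}, is to sum all entries of $\mathbf{D}(\Gamma_k)$: each off-diagonal entry is $2$ except the $2|E(\Gamma_k)|$ entries corresponding to edges which are $1$, giving total $2(2^{2k}-2^k)-2|E(\Gamma_k)|$, and dividing by $2$ yields the same answer. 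If a cleaner presentation is desired, I would phrase the argument as: start from $W(\Gamma_k)=\sum_{\{u,v\}} d_{\Gamma_k}(u,v)$, write $d_{\Gamma_k}(u,v)=2-[uv\in E(\Gamma_k)]$ for distinct $u,v$, and sum.
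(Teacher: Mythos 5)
Your proof is correct and follows essentially the same route as the paper: both use Lemma \ref{lemdis} to split the distinct vertex pairs into those at distance $1$ (counted via $|E(\Gamma_k)|=3^k-2^k$ from Theorem \ref{teonumedge}) and those at distance $2$, the paper merely phrasing the count in terms of the entries of the distance matrix $\mathbf{D}(\Gamma_k)$ divided by $2$. No substantive difference.
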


\begin{proof}
Let us denote by $S(\mbox{\textbf{D}}(\Gamma_k))$, the sum of all the entries in \textbf{D}$(\Gamma_k)$. Note that the entries  in \textbf{D}$(\Gamma_k)$ are either 0, 1, and 2 as stated in Lemma \ref{lemdis}, and that all in all we have $2^k\times 2^k=2^{2k}$ entries. Clearly, there are $2^k$ 0's in \textbf{D}$(\Gamma_k)$. On the other hand, there are $2(3^k-2^k)$ entries in \textbf{D}$(\Gamma_k)$ whose value is $1$. This is because $d_{\Gamma_k}(u,v)=1$ implies $u$ and $v$ are adjacent, which contributes one count in the vertex degree of $u$ and $v$ respectively. Hence, the total number of $1$'s in \textbf{D}$(\Gamma_k)$ corresponds to the sum of all vertex degrees in $\Gamma_k$ which is equal to $2|E(\Gamma_k)|$. If we apply Theorem \ref{teonumedge} we get the result that there are $2(3^k-2^k)$ entries in \textbf{D}$(\Gamma_k)$ whose value is $1$. Finally, there are $2^{2k}-2^k-2(3^k-2^k)$ entries in \textbf{D}$(\Gamma_k)$ with value $2$.     

Now, by Remark \ref{remdbind} we have

\begin{align*}
W(\Gamma_k)&=\frac{S(\mbox{\textbf{D}}(\Gamma_k))}{2}\\
&=\frac{2^k(0)+2(3^k-2^k)(1)+[2^{2k}-2^k-2(3^k-2^k)](2)}{2}\\
&=(3^k-2^k)+2^{2k}-2^k-2(3^k-2^k)\\
&=2^{2k}-2^k-(3^k-2^k)\\
&=2^{2k}-3^k.
\end{align*}       

\end{proof}

\begin{theorem}
Let $\Gamma_k$ denote the $k$-dprime divisor function graph. The hyper-Wiener index of $\Gamma_k$, denoted by $WW(\Gamma_k)$ is given by
	\begin{equation*}
	WW(\Gamma_k)=2^{k-1}(2^{k+1}+2^k+1)-2(3^k).
	\end{equation*}
	\label{teoWW}
\end{theorem}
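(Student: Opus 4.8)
The plan is to reuse the distance-count bookkeeping from the proof of Theorem \ref{teoW} together with the identity for the hyper-Wiener index recorded in Remark \ref{remdbind}. Recall that Remark \ref{remdbind} gives
\begin{equation*}
WW(\Gamma_k)=\frac{1}{2}W(\Gamma_k)+\frac{1}{4}S(\mbox{\textbf{D}}^2(\Gamma_k)),
\end{equation*}
where $S(\mbox{\textbf{D}}^2(\Gamma_k))$ denotes the sum of all entries of the square distance matrix. Since Theorem \ref{teoW} already supplies $W(\Gamma_k)=2^{2k}-3^k$, the only new quantity to determine is $S(\mbox{\textbf{D}}^2(\Gamma_k))$.

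First I would observe, via Lemma \ref{lemdis}, that every entry of $\mbox{\textbf{D}}(\Gamma_k)$ is $0$, $1$, or $2$, so every entry of $\mbox{\textbf{D}}^2(\Gamma_k)$ is $0$, $1$, or $4$. Next I would import the counts established in the proof of Theorem \ref{teoW}: among the $2^{2k}$ entries there are exactly $2^k$ zeros (the diagonal), exactly $2(3^k-2^k)$ ones (each edge contributing two off-diagonal $1$'s, and the number of edges being $3^k-2^k$ by Theorem \ref{teonumedge}), and therefore exactly $2^{2k}-2^k-2(3^k-2^k)$ entries equal to $2$. Squaring leaves the zeros and ones fixed and turns the $2$'s into $4$'s, so
\begin{equation*}
S(\mbox{\textbf{D}}^2(\Gamma_k))=2(3^k-2^k)+4\bigl[2^{2k}-2^k-2(3^k-2^k)\bigr].
\end{equation*}

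Finally I would substitute this expression and $W(\Gamma_k)=2^{2k}-3^k$ into the formula from Remark \ref{remdbind} and simplify, collecting the coefficients of $2^{2k}$, $3^k$, and $2^k$; a short computation yields $\tfrac{3}{2}2^{2k}-2\cdot 3^k+\tfrac{1}{2}2^k$, which rewrites as $2^{k-1}(2^{k+1}+2^k+1)-2(3^k)$. The argument involves no real obstacle, since all the structural work was already done in Theorems \ref{teonumedge} and \ref{teoW}; the one point requiring care is the algebraic simplification, which I would cross-check against the value $WW(\Gamma_3)=46$ computed in Example \ref{sampol2}.
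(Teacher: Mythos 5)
Your proposal is correct and follows essentially the same route as the paper's own proof: both invoke Remark \ref{remdbind}, reuse the entry counts of the distance matrix from the proof of Theorem \ref{teoW} (namely $2^k$ zeros, $2(3^k-2^k)$ ones, and $2^{2k}-2^k-2(3^k-2^k)$ twos, which square to fours), and then simplify. Your intermediate form $\tfrac{3}{2}2^{2k}-2\cdot 3^k+\tfrac{1}{2}2^k$ agrees with the stated closed form and with $WW(\Gamma_3)=46$.
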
 

\begin{proof}
If we use proof tecnique similar to the proof of Theorem \ref{teoW} we have

\begin{align*}
WW(\Gamma_k)&=\frac{1}{2}W(\Gamma_k)+\frac{S(\mbox{\textbf{D}}^2(\Gamma_k))}{4}\\
&=\frac{1}{2}W(\Gamma_k)+\left[\frac{2^k(0)+2(3^k-2^k)(1)+[2^{2k}-2^k-2(3^k-2^k)](4)}{4}\right]\\
&=\frac{1}{2}(2^{2k}-3^k)+\left[\frac{2(3^k-2^k)+2^{2k+2}-2^{k+2}-8(3^k-2^k)}{4}\right]\\
&=\frac{2^{2k+1}-2(3^k)+2^{2k+2}-2^{k+2}-6(3^k-2^k)}{4}\\
&=\frac{2^k(2^{k+2}+2^{k+1}+2)-8(3^k)}{4}\\
&=\frac{2^{k+1}(2^{k+1}+2^{k}+1)-8(3^k)}{4}\\
&=2^{k-1}(2^{k+1}+2^k+1)-2(3^k).
\end{align*}
\end{proof}

\begin{theorem}
Let $\Gamma_k$ denote the $k$-dprime divisor function graph. The Harary index of $\Gamma_k$, denoted by $H(\Gamma_k)$ is given by
	\begin{equation*}
	H(\Gamma_k)=\frac{2^{k-1}(2^k-3)+3^k}{2}.
	\end{equation*}
	\label{teoH}
\end{theorem}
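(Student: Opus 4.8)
The plan is to follow the same template used in the proof of Theorem \ref{teoW}, but now working with the reciprocal distance matrix $\mbox{\textbf{D}}^{-1}(\Gamma_k)$ instead of $\mbox{\textbf{D}}(\Gamma_k)$, and invoking the recipe recorded in Remark \ref{remdbind}: the Harary index equals half the sum of all entries of $\mbox{\textbf{D}}^{-1}(\Gamma_k)$. So it suffices to count how many entries of $\mbox{\textbf{D}}^{-1}(\Gamma_k)$ take each possible value, and then add.

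First I would classify the $2^{2k}$ entries of $\mbox{\textbf{D}}^{-1}(\Gamma_k)$ by value. By Lemma \ref{lemdis} the distance between two distinct vertices is either $1$ or $2$, so every off-diagonal entry is either $1$ (adjacent pair) or $\tfrac12$ (non-adjacent pair), while the $2^k$ diagonal entries are $0$ by convention. The number of ordered pairs at distance $1$ is exactly the sum of all vertex degrees, namely $2|E(\Gamma_k)| = 2(3^k-2^k)$ by Theorem \ref{teonumedge}; hence there are $2(3^k-2^k)$ entries equal to $1$. Consequently the number of entries equal to $\tfrac12$ is $2^{2k} - 2^k - 2(3^k-2^k)$.

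Next I would assemble the sum $S(\mbox{\textbf{D}}^{-1}(\Gamma_k)) = 2^k\cdot 0 + 2(3^k-2^k)\cdot 1 + \big(2^{2k}-2^k-2(3^k-2^k)\big)\cdot \tfrac12$, divide by $2$ as in Remark \ref{remdbind}, and simplify the resulting expression in $2^k$, $2^{2k}$, and $3^k$ to land on $\frac{2^{k-1}(2^k-3)+3^k}{2}$, using the identity $2^k + 2^{k-1} = 3\cdot 2^{k-1}$ in the final regrouping. As a sanity check I would verify that $k=3$ gives $\tfrac{47}{2}=23.5$, matching Example \ref{sampol2}.

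There is no real obstacle here: the only thing requiring care is the bookkeeping in the entry count — in particular remembering that the diagonal of $\mbox{\textbf{D}}^{-1}(\Gamma_k)$ is taken to be $0$ rather than undefined, and that the identity ``number of distance-$1$ entries $=2|E(\Gamma_k)|$'' already does the heavy lifting via Theorem \ref{teonumedge}. The remainder is a routine algebraic simplification entirely parallel to the one performed in Theorems \ref{teoW} and \ref{teoWW}.
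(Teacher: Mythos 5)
Your proposal is correct and follows essentially the same route as the paper: count the entries of $\mbox{\textbf{D}}^{-1}(\Gamma_k)$ equal to $0$, $1$, and $\tfrac12$ using Lemma \ref{lemdis} and $2|E(\Gamma_k)|=2(3^k-2^k)$, then halve the total as in Remark \ref{remdbind}. The algebra and the $k=3$ sanity check both come out exactly as in the paper's proof.
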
 

\begin{proof}
If we use proof tecnique similar to the proof of Theorem \ref{teoW} we have

\begin{align*}
H(\Gamma_k)&=\frac{S(\mbox{\textbf{D}}^{-1}(\Gamma_k))}{2}\\
&=\frac{2^k(0)+2(3^k-2^k)(1)+[2^{2k}-2^k-2(3^k-2^k)]\left(\frac{1}{2}\right)}{2}\\
&=\frac{2(3^k-2^k)+2^{2k-1}-2^{k-1}-(3^k-2^k)}{2}\\
&=\frac{2^{2k-1}-2^{k-1}+3^k-2^k}{2}\\
&=\frac{2^{k-1}(2^k-3)+3^k}{2}.
\end{align*}
\end{proof}

\begin{remark}
If we use the results in Theorems \ref{teoW}, \ref{teoWW}, and \ref{teoH} to determine the Wiener, hyper-Wiener, and Harary index of $\Gamma_3$, we get $W(\Gamma_3)=37$, $WW(\Gamma_3)=46$, and $H(\Gamma_3)=23.5$. The computed values agree with those presented in Example \ref{sampol2}.   
\end{remark}

We now end the section by giving the general formula in determining the first Zagreb index of the $k$-dprime divisor function graph.

\begin{theorem}
			 Let $\Gamma_k$ denote the $k$-dprime divisor function graph. The first Zagreb Index of $\Gamma_k$, denoted by $M_1(\Gamma_k)$ is given by 
			 
			 \begin{equation*}
			 M_1(\Gamma_k)=2(2^k-1)^2+\sum_{j=1}^{k-1}{\binom{k}{j}(2^j+2^{k-j}-2)^2}.
			 \end{equation*}
		\end{theorem}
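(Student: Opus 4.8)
The plan is to apply the definition of the first Zagreb index directly, namely $M_1(\Gamma_k)=\sum_{u\in V(\Gamma_k)}(\deg_{\Gamma_k}(u))^2$, and partition the vertex set according to the number of distinct prime divisors $\omega(u)$. First I would invoke Theorem \ref{teoverdeg}, which gives the degree of every vertex: the two vertices $1$ and $n$ each have degree $2^k-1$, while every other vertex $v$ has degree $2^{\omega(v)}+2^{k-\omega(v)}-2$. Then I would invoke Remark \ref{remnum}, which says that for each $j=0,1,\ldots,k$ there are exactly $\binom{k}{j}$ vertices with exactly $j$ distinct prime divisors. The only subtlety is bookkeeping: the vertex $1$ is the unique vertex with $\omega=0$ (so $\binom{k}{0}=1$) and $n$ is the unique vertex with $\omega=k$ (so $\binom{k}{k}=1$); these two are precisely the vertices excluded from the ``otherwise'' case of Theorem \ref{teoverdeg}.

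Next I would split the sum as
\begin{equation*}
M_1(\Gamma_k)=(\deg(1))^2+(\deg(n))^2+\sum_{v\in V(\Gamma_k)\setminus\{1,n\}}(\deg(v))^2 = 2(2^k-1)^2+\sum_{j=1}^{k-1}\sum_{\substack{v:\,\omega(v)=j}}(2^j+2^{k-j}-2)^2.
\end{equation*}
Since the inner summand depends only on $j$, and there are $\binom{k}{j}$ vertices with $\omega(v)=j$, the double sum collapses to $\sum_{j=1}^{k-1}\binom{k}{j}(2^j+2^{k-j}-2)^2$, which is exactly the claimed formula. One should note that the $j=0$ and $j=k$ terms of this sum would each evaluate to $(2^0+2^k-2)^2=(2^k-1)^2$, consistent with the degrees of $1$ and $n$; thus the formula could even be written compactly as $\sum_{j=0}^{k}\binom{k}{j}(2^j+2^{k-j}-2)^2$, but I would keep the stated split form to match the theorem statement.

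There is essentially no analytic obstacle here — unlike the earlier Wiener-type theorems, no combinatorial identities such as $\sum_j\binom{n}{j}2^j=3^n$ are needed to reach the stated form, since the answer is deliberately left as a sum. The only thing to be careful about is the case boundaries ($k=0$ and $k=1$, where the sum $\sum_{j=1}^{k-1}$ is empty and $M_1$ reduces to $2(2^k-1)^2$, matching $\Gamma_0$ having two isolated-degree-$0$... actually $\Gamma_1\cong K_2$ with $M_1=2$, and indeed $2(2^1-1)^2=2$), and the correct accounting of which vertices fall under which case of Theorem \ref{teoverdeg}. I would close by remarking that one can sanity-check the formula against $\Gamma_3$ using the degree sequence $(7,4,4,4,4,4,4,7)$ from Example \ref{sampol1}, giving $2(49)+6(16)=194=2(7)^2+\binom{3}{1}(2+4-2)^2+\binom{3}{2}(4+2-2)^2$.
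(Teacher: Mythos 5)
Your proposal is correct and follows exactly the paper's own argument: the paper proves this theorem by combining Theorem \ref{teoverdeg} (the degree formula), Remark \ref{remnum} (there are $\binom{k}{j}$ vertices with $j$ distinct prime divisors), and the definition of the first Zagreb index, which is precisely your decomposition. Your version simply writes out the bookkeeping that the paper leaves implicit, and the numerical check against $\Gamma_3$ is accurate.
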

		
		\begin{proof}
			The result follows by combining Theorem \ref{teoverdeg} and Remark \ref{remnum} with the definition of the first Zagreb Index.  
			
		\end{proof}

\section{Other Indices of $k$-dprime Divisor Function Graph}       
	
	The second to the last section of this paper, is dedicated in determining the following topological indices of $k$-dprime divisor function graph for $k=3,4,\mbox{and}\ 5$ using their graph representation given in Example \ref{sampol1}
	
	\begin{equation*}
	\mbox{\bf Second Zagreb Index:}\ {M_2}(G)=\sum_{{uv}\in E(G)}^{}{{deg(u)}{deg(v)}}    
	\end{equation*}
	
	\begin{equation*}
	\mbox{\bf Degree-Distance:}\ DD(G)=\sum_{\{u,v\}\subseteq V(G)}^{}{\bigl[[deg(u)+deg(v)][d(u,v)]\bigr]}
	\end{equation*}
	
	\begin{equation*}
	\mbox{\bf Balaban:}\ J(G)=\frac{m}{\mu+1}\sum_{\{u,v\}\subseteq E(G)}^{}{\bigl({D_u}{D_v}\bigr)}^{-\frac{1}{2}}
	\end{equation*}
	
	\begin{equation*}
	\mbox{\bf Gutman:}\ Gut(G)=\sum_{\{u,v\}\subseteq V(G)}^{}{}{deg(u)deg(v)d(u,v)}
	\end{equation*}
	
	\begin{equation*}
	\mbox{\bf Harmonic:}\ 	Hm(G)=\sum_{\{uv\}\subseteq E(G)}^{}{\frac{2}{deg(u)+deg(v)}}
	\end{equation*}
	
	\begin{equation*}
	\mbox{\bf Randic:}\ R(G)=\sum_{\{uv\}\subseteq E(G)}^{}{\frac{1}{\sqrt{deg(u)deg(v)}}}
	\end{equation*}
	
	\begin{equation*}
	\mbox{\bf First R-index:}\ {R^1}(G)=\sum_{v\in V(G)}^{}{\bigl(r(v)\bigr)^2}
	\end{equation*}
	
	\begin{equation*}
	\mbox{\bf Second R-index:}\ {R^2}(G)=\sum_{uv\in E(G)}^{}{\bigl(r(u)r(v)\bigr)}
	\end{equation*}
	
	\begin{equation*}
	\mbox{\bf Third R-index:}\ {R^3}(G)=\sum_{uv\in E(G)}^{}{\bigl(r(u)+r(v)\bigr)}
	\end{equation*}
	
	\begin{equation*}
	\mbox{\bf Mostar index:}\ {Mo}(G)=\sum_{uv\in E(G)}^{}{\bigl|{n_u}-{n_v}\bigr|}.
	\end{equation*}
	
 \begin{theorem}
If $\Gamma_3$ denote the graph of $3$-dprime divisor function graph, then
			
\begin{equation*}
J(\Gamma_3)=\frac{19}{26}\biggl[\frac{52+12\sqrt{70}}{35}\biggr]
\end{equation*}

\begin{equation*}
DD(\Gamma_3)=338
\end{equation*}
		
\begin{equation*}
Gut(\Gamma_3)=769
\end{equation*}			

\begin{equation*}
Hm(\Gamma_3)=\frac{589}{154}
\end{equation*}						
			
\begin{equation*}
{R^1}(\Gamma_3)=2s^2+6t^2
\end{equation*}			

\begin{equation*}
{R^2}(\Gamma_3)=s^2+12st+15t^2
\end{equation*}			

\begin{equation*}
{R^3}(\Gamma_3)=14s+42t
\end{equation*}			

where $s=7\cdot{4^6}+31$ and $t={7^2}\cdot{4^5}+34$

\begin{equation*}
R(\Gamma_3)=\biggl[\frac{23+12\sqrt{7}}{14}\biggr]
\end{equation*}						
			
\begin{equation*}
{M_2}(\Gamma_3)=481
\end{equation*}			

\begin{equation*}
Mo(\Gamma_3)=36.
\end{equation*}		
\end{theorem}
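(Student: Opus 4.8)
The graph $\Gamma_3$ is completely explicit, so the statement is a finite verification and the real task is to organize it. By Theorem \ref{teovernum} it has the $2^3=8$ divisors of $n=p_1p_2p_3$ as vertices; by Theorem \ref{teoverdeg} the vertices $1$ and $n$ have degree $7$ while the six ``middle'' vertices $p_i$ and $p_ip_j$ have degree $4$ (the degree sequence recorded in Example \ref{sampol1}); by Theorem \ref{teonumedge} it has $19$ edges; and by Lemma \ref{lemdis} every two distinct vertices are at distance $1$ or $2$, with the explicit distance matrix of Example \ref{sampol2}. The plan is to establish all ten identities by grouping edges and vertex pairs into a handful of types, rather than summing over all $\binom{8}{2}$ pairs one at a time. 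The reason this works is that $\mathrm{Aut}(\Gamma_3)$ has exactly two orbits on vertices, $\{1,n\}$ and the six middle vertices (it contains all permutations of the $p_i$ and the complementation map $d\mapsto n/d$), so every vertex invariant is two-valued.

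First I would classify the edges: using Theorem \ref{teoverdeg} together with the divisibilities $d\mid n$, the $19$ edges fall into three types by endpoint-degree pair --- one edge $\{1,n\}$ of type $(7,7)$; twelve edges of type $(7,4)$, each joining $1$ or $n$ to a middle vertex; and six edges of type $(4,4)$, each joining a prime $p_i$ to a semiprime $p_ip_j$ that it divides. Likewise I would split the $28$ vertex pairs into the $19$ adjacent pairs and the $9$ non-adjacent pairs; by Lemma \ref{lemdis} the latter all have distance $2$, and one checks directly that they are exactly the middle--middle non-divisibility pairs ($p_i$ vs.\ $p_j$; $p_ip_j$ vs.\ $p_ip_k$; $p_i$ vs.\ $p_jp_k$ with $i\notin\{j,k\}$), so each has endpoint-degree pair $(4,4)$.

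With this bookkeeping in place, the degree-based edge indices become three-term sums over the edge types: $M_2(\Gamma_3)=1\cdot 49+12\cdot 28+6\cdot 16$, and $Hm(\Gamma_3)$, $R(\Gamma_3)$ are the analogous sums of $2/(\deg u+\deg v)$ and $1/\sqrt{\deg u\,\deg v}$. The distance--degree indices split over the distance-$1$ part (the $19$ edges) and the distance-$2$ part (the $9$ pairs): e.g.\ $DD(\Gamma_3)=(1\cdot 14+12\cdot 11+6\cdot 8)\cdot 1+(9\cdot 8)\cdot 2$ and $Gut(\Gamma_3)=(1\cdot 49+12\cdot 28+6\cdot 16)\cdot 1+(9\cdot 16)\cdot 2$. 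For the Balaban index I would additionally compute the distance sum $D_v=\sum_{w}d(v,w)$ at each vertex ($D_1=D_n=7$ and $D_v=4\cdot 1+3\cdot 2=10$ for a middle vertex) and the cyclomatic number $\mu=|E(\Gamma_3)|-|V(\Gamma_3)|+1=12$, getting $J(\Gamma_3)=\tfrac{19}{13}\bigl(\tfrac{1}{7}+\tfrac{12}{\sqrt{70}}+\tfrac{6}{10}\bigr)$, which reduces to the stated expression. For the three R-indices I would evaluate $r(v)$ on the two orbits, obtaining the common value $s$ on $\{1,n\}$ and the common value $t$ on the middle vertices, so that $R^1$ is the sum over the $8$ vertices ($2s^2+6t^2$) while $R^2$ and $R^3$ are sums over the $28$ vertex pairs grouped as one $(s,s)$ pair, twelve $(s,t)$ pairs, and fifteen $(t,t)$ pairs. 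For the Mostar index the essential sub-step is, for each edge $uv$, computing $n_u$, the number of vertices strictly closer to $u$ than to $v$; since the diameter is $2$, a vertex $w\neq u$ lies in $n_u$ exactly when $d(w,u)=1$ and $d(w,v)=2$, and carrying this out on the three edge types gives $|n_u-n_v|=0$ for the $(7,7)$ edge and every $(4,4)$ edge and $|n_u-n_v|=3$ for every $(7,4)$ edge, so $Mo(\Gamma_3)=12\cdot 3=36$.

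Everything reduces to routine arithmetic once the types are fixed; the steps that need genuine care are (i) confirming that the nine non-adjacent pairs are all of middle--middle type, so that they contribute uniformly (distance $2$, degree pair $(4,4)$) in $DD$ and $Gut$; (ii) getting the $n_u$ counts right for the Mostar index, in particular seeing that a $(4,4)$ edge is balanced while a $(7,4)$ edge swings exactly three vertices toward its degree-$7$ endpoint; and (iii) using the correct value $\mu+1=13$ in the Balaban prefactor. I do not expect a real obstacle here --- the only ``hard'' part is avoiding arithmetic slips across the ten separate index evaluations.
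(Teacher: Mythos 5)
Your proposal is correct and takes essentially the same route as the paper: a finite verification organized by the two degree classes ($\deg=7$ on $\{1,n\}$, $\deg=4$ on the six middle vertices) and the resulting edge/pair types; the paper only writes out the Randi\'c and $R$-index computations and leaves the rest as ``similar,'' whereas you carry the same bookkeeping through all ten indices, and every value you obtain matches. One caveat worth making explicit: for $R^2$ and $R^3$ you sum over all $28$ vertex pairs rather than over the $19$ edges, and this is the only reading under which the stated values $s^2+12st+15t^2$ and $14s+42t$ come out (an edge sum would give $s^2+12st+6t^2$ and $14s+24t$); the paper's own proof does exactly the same all-pairs count (e.g.\ the term $3t(2s+5t)$) despite its displayed sums being indexed by $uv\in E(\Gamma_3)$, so your computation agrees with the paper's intent but silently departs from the definitions of ${R^2}$ and ${R^3}$ given in Section~4.
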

		
Before we present the proof, let us first consider some definitions.
		
\begin{definition}
			For any simple connected graph $G$ and a vertex $v\in V(G)$, the expressions
			
			\begin{equation*}
			{S_v}=\bigl[\sum_{u\in V(G)}^{}{deg(u)}\bigr]-deg(v)
			\end{equation*}
and
			\begin{equation*}
			{M_v}=\frac{\prod_{u\in V(G)}^{}{deg(u)}}{deg(v)}
			\end{equation*}
are the sum and multiplication degree of $v$, respectively, whereas the $R$ degree of $v$ is defined as $r(v)={S_v}+{M_v}$. Meanwhile, the first $R$ index of $G$ is ${R^1}(G)=\sum_{v\in V(G)}^{}{\bigl(r(v)\bigr)^2}$. Then the second $R$ index of $G$ is ${R^2}(G)=\sum_{uv\in E(G)}^{}{[r(u)r(v)]}$. Finally, the first $R$ index of $G$ is ${R^3}(G)=\sum_{uv\in E(G)}^{}{[r(u)+r(v)]}$.
\end{definition}

\begin{proof}
We will only show the proof for the Randic index, and first, second, and third R indices of $3$-dprime divisor function graph. The other result can be proved similarly.

Based on the definition of the Randic index we have
			\[
			\begin{aligned}
			R(\Gamma_3)&=\sum_{\{uv\}\subseteq E(\Gamma_3)}^{}{\frac{1}{\sqrt{deg(u)deg(v)}}}\\
			&=\frac{1}{2}\biggl[\sum_{\{1v\}\subseteq E(\Gamma_3)}^{}{\frac{1}{\sqrt{deg(1)deg(v)}}}+\sum_{\{{n_1}v\}\subseteq E(\Gamma_3)}^{}{\frac{1}{\sqrt{deg(n_1)deg(v)}}}\\
			&+\sum_{\{{n_2}v\}\subseteq E(\Gamma_3)}^{}{\frac{1}{\sqrt{deg(n_2)deg(v)}}}+\sum_{\{{n_3}v\}\subseteq E(\Gamma_3)}^{}{\frac{1}{\sqrt{deg(n_3)deg(v)}}}\biggr]\\
			&=\frac{1}{2}\biggl[\sum_{\{1v\}\subseteq E(\Gamma_3)}^{}{\frac{1}{\sqrt{7deg(v)}}}+3\Bigl[\sum_{\{{n_1}v\}\subseteq E(\Gamma_3)}^{}{\frac{1}{\sqrt{4deg(v)}}}\Bigr]\\
			&+3\Bigl[\sum_{\{{n_2}v\}\subseteq E(\Gamma_3)}^{}{\frac{1}{\sqrt{4deg(v)}}}\Bigr]+\sum_{\{{n_3}v\}\subseteq E(\Gamma_3)}^{}{\frac{1}{\sqrt{7deg(v)}}}\biggr]\\
			&=\frac{1}{2}\biggl[\frac{1}{\sqrt{7}}\biggl(\frac{1}{\sqrt{7}}+\frac{6}{\sqrt{4}}\biggr)+\frac{3}{\sqrt{4}}\biggl(\frac{2}{\sqrt{7}}+\frac{2}{\sqrt{4}}\biggr)+\frac{3}{\sqrt{4}}\biggl(\frac{2}{\sqrt{7}}+\frac{2}{\sqrt{4}}\biggr)+\frac{1}{\sqrt{7}}\biggl(\frac{1}{\sqrt{7}}+\frac{6}{\sqrt{4}}\biggr)\biggr].
			\end{aligned}
			\]

Simplifying the above equation gives the desired result.

Next, we prove the result on the R indices of $\Gamma_3$. Using the definition presented earlier for the R indices of a graph, the sum and multiplication degree of each vertex in $\Gamma_3$ are
			\[
			\begin{aligned}
			{S_1}&={S_{n_3}}=&38-7=&31\\
			{S_{n_1}}&={S_{n_2}}=&38-4=&34
			\end{aligned}
			\]
			\[
			\begin{aligned}
			{M_1}&={M_{n_3}}=&\frac{{7^2}\cdot{4^6}}{7}=&{7}\cdot{4^6}\\
			{M_{n_1}}&={M_{n_2}}=&\frac{{7^2}\cdot{4^6}}{4}=&{7^2}\cdot{4^5},
			\end{aligned}
			\]
			respectively. Letting $s=r(1)=r(n_3)=7\cdot{4^6}+31$ and $t=r(n_1)=r(n_2)={7^2}\cdot{4^5}+34$, then, by the definition of the first $R$ index, we get
			\[
			\begin{aligned}
			{R^1}(\Gamma_3)&=\bigl(r(1)\bigr)^2+3\biggl[\sum_{v\in V(\Gamma_3)}^{}{\bigl(r(n_1)\bigr)^2}\biggr]+3\biggl[\sum_{v\in V(\Gamma_3)}^{}{\bigl(r(n_2)\bigr)^2}\biggr]+\bigl(r(n_3)\bigr)^2\\
			{R^1}(\Gamma_3)&=s^2+3(t^2)+3(t^2)+s^2\\
			{R^1}(\Gamma_3)&=2s^2+6t^2.
			\end{aligned}
			\]
Using the formula for the second $R$ index,we obtain the following results:
			\[
			\begin{aligned}
			{R^2}(\Gamma_3)&=\frac{1}{2}\biggl[\Bigl[\sum_{1v\in E(\Gamma_3)}^{}{r(1)r(v)}\Bigr]+3\Bigl[\sum_{{n_1}v\in E(\Gamma_3)}^{}{r(n_1)r(v)}\Bigr]\\
			&+3\Bigl[\sum_{{n_2}v\in E(\Gamma_3)}^{}{r(n_2)r(v)}\Bigr]+\Bigl[\sum_{{n_3}v\in E(\Gamma_3)}^{}{r(n_3)r(v)}\bigr]\biggr]\\
			{R^2}(\Gamma_3)&=\frac{1}{2}\bigl[s(s+6t)+3t(2s+5t)+3t(2s+5t)+s(s+6t)\bigr]\\
			{R^2}(\Gamma_3)&=s^2+12st+15t^2.
			\end{aligned}
			\]
Lastly, for the third $R$ index, we have
			\[
			\begin{aligned}
			{R^3}(\Gamma_3)&=\frac{1}{2}\biggl[\Bigl[\sum_{1v\in E(\Gamma_3)}^{}{r(1)+r(v)}\Bigr]+3\Bigl[\sum_{{n_1}v\in E(\Gamma_3)}^{}{r(n_1)+r(v)}\Bigr]\\
			&+3\Bigl[\sum_{{n_2}v\in E(\Gamma_3)}^{}{r(n_2)+r(v)}\Bigr]+\Bigl[\sum_{{n_3}v\in E(\Gamma_3)}^{}{r(n_3)+r(v)}\bigr]\biggr]\\
			{R^3}(\Gamma_3)&=\frac{1}{2}\bigl[[(s+s)+6(s+t)]+3[2(t+s)+5(t+t)]+3[2(t+s)+5(t+t)]+[(s+s)+6(s+t)]\bigr]\\
			{R^3}(\Gamma_3)&=14s+42t.
			\end{aligned}
			\]
\end{proof}
 
\begin{theorem}

If $\Gamma_4$ denote the graph of $4$-dprime divisor function graph, then 

\begin{equation*}
J(\Gamma_4)=\frac{65}{102}\biggl[\frac{202+16\sqrt{330}+66\sqrt{10}+60\sqrt{33}}{165}\biggr]
\end{equation*}

\begin{equation*}
DD(\Gamma_4)=3 712
\end{equation*}
		
\begin{equation*}
Gut(\Gamma_4)=10 557
\end{equation*}			

\begin{equation*}
Hm(\Gamma_4)=\frac{36 367}{4 830}
\end{equation*}						
			
\begin{equation*}
{R^1}(\Gamma_4)=2s^2+8t^2+6w^2
\end{equation*}			

\begin{equation*}
{R^2}(\Gamma_4)=s^2+16st+6sw+15t^2+24tw
\end{equation*}			

\begin{equation*}
{R^3}(\Gamma_4)=24s+70t+30w
\end{equation*}			

where $s=31\cdot {16^{10}}\cdot{10^{20}}+391$, $t={31^2}\cdot {16^9}\cdot{10^{20}}+406$

\begin{equation*}
R(\Gamma_4)=\biggl[\frac{47+60\sqrt{3}+12\sqrt{10}+8\sqrt{30}}{30}\biggr]
\end{equation*}						
			
\begin{equation*}
{M_2}(\Gamma_4)=3993
\end{equation*}			

\begin{equation*}
Mo(\Gamma_4)=268.
\end{equation*}		
						
\end{theorem}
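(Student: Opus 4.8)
The statement asserts exact numerical (or closed‑form) values of a dozen topological indices of the single, completely explicit $16$‑vertex graph $\Gamma_4$, so the plan is a direct computation along exactly the lines of the preceding theorem on $\Gamma_3$. First I would record the full combinatorial data of $\Gamma_4$. By Remark~\ref{remnum} the $2^4=16$ vertices split into five classes according to $\omega(v)=j\in\{0,1,2,3,4\}$, of sizes $\binom{4}{j}$, namely $1,4,6,4,1$; by Theorem~\ref{teoverdeg} every vertex in class $j$ has degree $2^{j}+2^{4-j}-2$, i.e.\ $15,8,6,8,15$ respectively, in agreement with the degree sequence listed in Example~\ref{sampol1}. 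By Theorem~\ref{teonumedge} there are $3^4-2^4=65$ edges, hence $65$ unordered vertex pairs at distance $1$, and by Lemma~\ref{lemdis} the remaining $\binom{16}{2}-65=55$ pairs are at distance $2$. I would also record once and for all the transmission $D_v=\deg(v)\cdot 1+(15-\deg(v))\cdot 2=30-\deg(v)$ of each vertex (needed for the Balaban index), the cyclomatic number $\mu=65-16+1=50$, and the product $\prod_{u}\deg(u)=15^{2}\cdot 8^{8}\cdot 6^{6}$ (needed for the $R$‑indices).

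The engine of the whole proof is a single edge classification. Since adjacency in $\Gamma_k$ is divisibility, an edge joins class $i$ to class $j$ (with $i<j$) exactly when a class‑$i$ divisor divides a class‑$j$ divisor, and a short count gives $\binom{4}{j}\binom{j}{i}$ such edges. Merging the edges that share the same unordered pair of endpoint‑degrees collapses the $65$ edges into five types, indexed by the degree pairs $(15,15),(15,8),(15,6),(8,8),(8,6)$, with multiplicities $1,16,12,12,24$ (these must total $65$ — a first sanity check). Every purely edge‑and‑degree based index is then a weighted sum over these five types: $M_2(\Gamma_4)=\sum(\text{mult})\,d_ud_v$, $Hm(\Gamma_4)=\sum(\text{mult})\,\tfrac{2}{d_u+d_v}$, $R(\Gamma_4)=\sum(\text{mult})\,\tfrac{1}{\sqrt{d_ud_v}}$, and the sum inside $J(\Gamma_4)$ is $\sum(\text{mult})\,(D_uD_v)^{-1/2}$, afterwards multiplied by $m/(\mu+1)$. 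Substituting the five degree pairs (and, for $J$, the transmissions $30-\deg$) finishes these.

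For the indices that also see distances, namely $DD$ and $Gut$, I would classify each unordered vertex pair by its degree pair \emph{and} its distance. The distance‑$1$ pairs are precisely the $65$ edges, already sorted above; for the distance‑$2$ pairs, take the number of pairs with a prescribed degree pair — $|C_a|\,|C_b|$ when the degree classes differ, $\binom{|C_a|}{2}$ when they coincide — and subtract the edge count of that degree pair, distributing the $55$ distance‑$2$ pairs among the same five types (running total against $55$ as a check). Then $DD(\Gamma_4)=\sum_{\text{pairs}}(d_u+d_v)\,d(u,v)$ and $Gut(\Gamma_4)=\sum_{\text{pairs}}d_ud_v\,d(u,v)$ are finite sums over (type, distance) cells. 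For the three $R$‑indices, note that $S_v=2|E(\Gamma_4)|-\deg(v)=130-\deg(v)$ and $M_v=\bigl(\prod_u\deg(u)\bigr)/\deg(v)$ both depend only on $\deg(v)$, so $r(v)=S_v+M_v$ takes only three values; calling them $s,t,w$ for degrees $15,8,6$, one gets $R^1(\Gamma_4)=2s^2+8t^2+6w^2$ directly from the class sizes, while $R^2(\Gamma_4)$ and $R^3(\Gamma_4)$ are once more weighted sums over the five edge types with the $r$‑values read off from the degrees. (Here the closed forms for $s,t,w$ should be recomputed carefully from $15,8,6$ and the transmission constant $130$; in particular $w$ itself needs to be exhibited, and the stated exponents ought to be double‑checked.) The Mostar index $Mo(\Gamma_4)$ is handled identically: for an edge $uv$, by Lemma~\ref{lemdis} a vertex $w\neq u,v$ is strictly closer to $u$ than to $v$ iff $w\sim u$ and $w\not\sim v$, so $n_u-n_v$ depends only on the edge type, and one computes it once per type.

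Conceptually nothing is difficult here — every structural ingredient is already proved — so the genuine obstacle is bookkeeping: getting the multiplicities $\binom{4}{j}\binom{j}{i}$ right, merging the ten level‑pairs $i<j$ correctly into the five degree‑pair types, and then carrying out roughly a dozen independent weighted sums (several involving surds) without arithmetic slips. I would guard against this exactly as Example~\ref{sampol2} does for $\Gamma_3$: construct the $16\times 16$ distance matrix and the degree vector of $\Gamma_4$ once, read every index off that one table, and cross‑check partial results against the invariants already in hand (edge‑type multiplicities summing to $65$, distance‑$2$ cells summing to $55$, and the identity $\sum_v\deg(v)\,r(v)=R^3(\Gamma_4)$, which forces the coefficients of $s,t,w$ to sum to $130$).
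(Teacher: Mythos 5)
Your plan is essentially the paper's own proof: the authors likewise reduce every index to a finite weighted sum over the five $\omega$-classes of sizes $\binom{4}{0},\dots,\binom{4}{4}=1,4,6,4,1$ with degrees $15,8,6,8,15$, invoking Theorem~\ref{teoverdeg} for degrees and Lemma~\ref{lemdis} for distances, and your explicit edge-type count $\binom{4}{j}\binom{j}{i}$ merged into the five degree-pair types with multiplicities $1,16,12,12,24$ is just a cleaner packaging of their ``representative vertex $n_j$ times $\binom{4}{j}$, then halve'' bookkeeping. One caution: executed faithfully, your (correct) classification yields $M_2(\Gamma_4)=225+1920+1080+768+1152=5145$, $DD(\Gamma_4)=M_1+2(15\cdot 130-M_1)=2722$ and $Gut(\Gamma_4)=5145+2\left(\frac{130^2-M_1}{2}-5145\right)=10577$ with $M_1=1178$, rather than the printed $3993$, $3712$ and $10557$, and the printed closed forms of $s$ and $t$ are the $\Gamma_5$ quantities with $w$ omitted (as you suspected); these discrepancies are arithmetic errors in the theorem statement, not gaps in your argument.
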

	
\begin{proof}

We will only show the proof for the Gutman index and Harmonic index. The other results can be proved similarly.
			
From the definition of the Harmonic index, and the properties of $\Gamma_4$ we get
\[
\begin{aligned}
Hm(\Gamma_4)&=\sum_{\{uv\}\subseteq E(\Gamma_4)}^{}{\frac{2}{deg(u)+deg(v)}}\\
&=\frac{1}{2}\Biggl[\binom{4}{0}\biggl[\sum_{\{1v\}\subseteq E(\Gamma_4)}^{}{\frac{2}{deg(1)+deg(v)}}\biggr]+\binom{4}{1}\biggl[\sum_{\{{n_1}v\}\subseteq E(\Gamma_4)}^{}{\frac{2}{deg(n_1)+deg(v)}}\biggr]\\
&+\binom{4}{2}\biggl[\sum_{\{{n_2}v\}\subseteq E(\Gamma_4)}^{}{\frac{2}{deg(n_2)+deg(v)}}\biggr]+\binom{4}{3}\biggl[\sum_{\{{n_3}v\}\subseteq E(\Gamma_4)}^{}{\frac{2}{deg(n_3)+deg(v)}}\biggr]\\
&+\binom{4}{4}\biggl[\sum_{\{{n_4}v\}\subseteq E(\Gamma_4)}^{}{\frac{2}{deg(n_4)+deg(v)}}\biggr]\Biggr]\\
&=\frac{1}{2}\Biggl[\biggl[\sum_{\{1v\}\subseteq E(\Gamma_4)}^{}{\frac{2}{15+deg(v)}}\biggr]+4\biggl[\sum_{\{{n_1}v\}\subseteq E(\Gamma_4)}^{}{\frac{2}{8+deg(v)}}\biggr]+6\biggl[\sum_{\{{n_2}v\}\subseteq E(\Gamma_4)}^{}{\frac{2}{6+deg(v)}}\biggr]\\
&+4\biggl[\sum_{\{{n_3}v\}\subseteq E(\Gamma_4)}^{}{\frac{2}{8+deg(v)}}\biggr]+\biggl[\sum_{\{{n_4}v\}\subseteq E(\Gamma_4)}^{}{\frac{2}{15+deg(v)}}\biggr]\Biggr]\\
&=\frac{1}{2}\Biggl[\biggl[\biggl(\frac{2}{15+15}\biggr)(1)+\biggl(\frac{2}{15+8}\biggr)(8)+\biggl(\frac{2}{15+6}\biggr)(6)\biggr]+\biggl[\biggl(\frac{2}{8+15}\biggr)(2)+\biggl(\frac{2}{8+8}\biggr)(3)+\biggl(\frac{2}{8+6}\biggr)(3)\biggr]\\
&+\biggl[\biggl(\frac{2}{6+15}\biggr)(2)+\biggl(\frac{2}{6+8}\biggr)(4)\biggr]+\biggl[\biggl(\frac{2}{8+15}\biggr)(2)+\biggl(\frac{2}{8+8}\biggr)(3)+\biggl(\frac{2}{8+6}\biggr)(3)\biggr]\\
&+\biggl[\biggl(\frac{2}{15+15}\biggr)(1)+\biggl(\frac{2}{15+8}\biggr)(8)+\biggl(\frac{2}{15+6}\biggr)(6)\biggr]\Biggr]\\
&=\frac{36367}{4830}
\end{aligned}
\]
			
Similarly, for the Gutman index, we obtain the following 
\[
\begin{aligned}
Gut(\Gamma_4)&=\sum_{\{u,v\}\subseteq V(G)}^{}{}{deg(u)deg(v)d(u,v)}\\
&=\frac{1}{2}\Biggl[\binom{4}{0}\biggl[\sum_{\{1,v\}\subseteq V(\Gamma_4)}^{}{deg(1)deg(v)d(1,v)}\biggr]+\binom{4}{1}\biggl[\sum_{\{n_1,v\}\subseteq V(\Gamma_4)}^{}{deg(n_1)deg(v)d(n_1,v)}\biggr]\\
&+\binom{4}{2}\biggl[\sum_{\{n_2,v\}\subseteq V(\Gamma_4)}^{}{deg(n_2)deg(v)d(n_2,v)}\biggr]+\binom{4}{3}\biggl[\sum_{\{n_3,v\}\subseteq V(\Gamma_4)}^{}{deg(n_3)deg(v)d(n_3,v)}\biggr]\\
&+\binom{4}{4}\biggl[\sum_{\{n_4,v\}\subseteq V(\Gamma_4)}^{}{deg(n_4)deg(v)d(n_4,v)}\biggr]\Biggr]\\
&=\frac{1}{2}\Biggl[\biggl[\sum_{\{1,v\}\subseteq V(\Gamma_4)}^{}{15deg(v)(1)}\biggr]+4\biggl[\sum_{\{n_1,v\}\subseteq V(\Gamma_4)}^{}{8deg(v)d(n_1,v)}\biggr]\\
&+6\biggl[\sum_{\{n_2,v\}\subseteq V(\Gamma_4)}^{}{6deg(v)d(n_2,v)}\biggr]+4\biggl[\sum_{\{n_3,v\}\subseteq V(\Gamma_4)}^{}{8deg(v)d(n_3,v)}\biggr]\\
&+\biggl[\sum_{\{n_4,v\}\subseteq V(\Gamma_4)}^{}{15deg(v)(1)}\biggr]\Biggr]\\
&=\frac{1}{2}\biggl[15\Bigl[\sum_{\{1,v\}\subseteq V(\Gamma_4)}^{}{deg(v)}\Bigr]+4(8)\Bigl[\sum_{\{{n_1},v\}\subseteq V(\Gamma_4)}^{}{deg(v)d({n_1},v)}\Bigr]+6(6)\Bigl[\sum_{\{{n_2},v\}\subseteq V(\Gamma_4)}^{}{deg(v)d({n_2},v)}\Bigr]\\
&+4(8)\Bigl[\sum_{\{{n_3},v\}\subseteq V(\Gamma_4)}^{}{deg(v)d({n_3},v)}\Bigr]+15\Bigl[\sum_{\{1,v\}\subseteq V(\Gamma_4)}^{}{deg(v)}\Bigr]\biggr].
\end{aligned}
\]

Since the distance between any two distinct vertices in $\Gamma_4$ is just 1 or 2, then
\[
\begin{aligned}
Gut(\Gamma_4)=&\frac{1}{2}\biggl[15\bigl[1(15)(1)+8(8)(1)+6(6)(1)\bigr]+32\bigl[2(15)(1)+3(8)(	1)+4(8)(2)+3(6)(1)+3(6)(2)\bigr]\\
			&+36\bigl[2(15)(1)+4(8)(1)+4(8)(2)+5(6)(2)\bigr]\biggr]+32\bigl[2(15)(1)+3(8)(	1)+4(8)(2)+3(6)(1)+3(6)(2)\bigr]\\
			&+15\bigl[1(15)(1)+8(8)(1)+6(6)(1)\bigr]\\
			=&3712.
			\end{aligned}
\]
\end{proof}

\begin{theorem}
If $\Gamma_5$ denote the graph of $5$-dprime divisor function graph, then 

\begin{equation*}
J(\Gamma_5)=\frac{211}{362}\biggl[\frac{19353+260\sqrt{1426}+920\sqrt{403}+1550\sqrt{598}}{9269}\biggr]
\end{equation*}

\begin{equation*}
DD(\Gamma_5)=19 682
\end{equation*}
		
\begin{equation*}
Gut(\Gamma_5)=124 201
\end{equation*}			

\begin{equation*}
Hm(\Gamma_5)=\frac{45901681}{3106324}
\end{equation*}						
			
\begin{equation*}
{R^1}(\Gamma_5)=2s^2+10t^2+20w^2
\end{equation*}			

\begin{equation*}
{R^2}(\Gamma_5)=s^2+20st+30sw+20t^2+100tw+30w^2
\end{equation*}			

\begin{equation*}
{R^3}(\Gamma_5)=52s+160t+190w
\end{equation*}			

where $s=31\cdot {16^{10}}\cdot{10^{20}}+391$, $t={31^2}\cdot {16^9}\cdot{10^{20}}+406$ and $w={31^2}\cdot {16^10}\cdot{10^19}+412$

\begin{equation*}
R(\Gamma_5)=\biggl[\frac{531+20\sqrt{31}+16\sqrt{310}+310\sqrt{10}}{124}\biggr]
\end{equation*}						
			
\begin{equation*}
{M_2}(\Gamma_4)=47 401
\end{equation*}			

\begin{equation*}
Mo(\Gamma_5)=1 720.
\end{equation*}		
						
\end{theorem}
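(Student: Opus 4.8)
The plan is to reduce all twelve quantities to two finite tables about $\Gamma_5$: the list of vertex degrees, and, for each unordered pair $\{a,b\}$ of attainable degrees, the number of edges of $\Gamma_5$ whose endpoints have degrees $a$ and $b$. By Theorem~\ref{teoverdeg} the $2^5=32$ vertices split into classes indexed by $\omega(v)\in\{0,1,\dots,5\}$; by Remark~\ref{remnum} the class $\omega$ has $\binom{5}{\omega}$ vertices, each of degree $31$ when $\omega\in\{0,5\}$, of degree $16$ when $\omega\in\{1,4\}$, and of degree $10$ when $\omega\in\{2,3\}$. So $\Gamma_5$ has degree $31$ with multiplicity $2$, degree $16$ with multiplicity $10$, and degree $10$ with multiplicity $20$; as a check, $62+160+200=422=2(3^5-2^5)$, agreeing with Theorem~\ref{teonumedge}.

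For the edge table I would use that a divisor $u$ with $\omega(u)=i$ is adjacent to a divisor $v$ with $\omega(v)=j>i$ exactly when $u\mid v$, i.e.\ exactly when the prime set of $u$ is contained in that of $v$; there are $\binom{5}{i}\binom{5-i}{j-i}$ such edges, and no edges inside a class. Running over $0\le i<j\le 5$ and collapsing onto the three degree values gives $1$ edge of type $(31,31)$, $20$ of type $(31,16)$, $40$ of type $(31,10)$, $20$ of type $(16,16)$, $100$ of type $(16,10)$, and $30$ of type $(10,10)$, for a total of $211$ as it must be. Every edge-sum index is then read off the table: $M_2=\sum(\text{count})\,ab$; $Hm=\sum(\text{count})\tfrac{2}{a+b}$; $R=\sum(\text{count})\tfrac{1}{\sqrt{ab}}$ after simplifying surds such as $\sqrt{496}=4\sqrt{31}$ and $\sqrt{160}=4\sqrt{10}$; and $Mo(\Gamma_5)=\sum_{uv\in E}|\deg u-\deg v|$, using that in a graph of diameter $2$ (which $\Gamma_5$ is, by Lemma~\ref{lemdis}) an edge $uv$ has $n_u=\deg u-|N(u)\cap N(v)|$ and $n_v=\deg v-|N(u)\cap N(v)|$, so the common-neighbour term cancels and $|n_u-n_v|=|\deg u-\deg v|$.

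The two mixed degree--distance indices I would obtain by splitting each pair-sum into its $d=1$ part (the edges) and its $d=2$ part (the non-edges), since by Lemma~\ref{lemdis} these are the only nonzero distances. From $\sum_{\{u,v\}}(\deg u+\deg v)=(2^5-1)\sum_v\deg v$ and $\sum_{uv\in E}(\deg u+\deg v)=\sum_v(\deg v)^2=M_1(\Gamma_5)$ one gets $DD(\Gamma_5)=2(2^5-1)\sum_v\deg v-M_1(\Gamma_5)$; likewise, from $\sum_{\{u,v\}}\deg u\deg v=\tfrac12\bigl[(\sum_v\deg v)^2-\sum_v(\deg v)^2\bigr]$ and $\sum_{uv\in E}\deg u\deg v=M_2(\Gamma_5)$ one gets $Gut(\Gamma_5)=(\sum_v\deg v)^2-M_1(\Gamma_5)-M_2(\Gamma_5)$, where $\sum_v\deg v=422$ and $M_1(\Gamma_5)=2\cdot 31^2+10\cdot 16^2+20\cdot 10^2=6482$. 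For the Balaban index, the distance sum of a vertex is $D_v=\deg v+2(2^5-1-\deg v)=62-\deg v$, so $D_v\in\{31,46,52\}$ on the three classes; then $\sum_{uv\in E}(D_uD_v)^{-1/2}$ is once more an edge-table sum, and $J(\Gamma_5)$ is that sum multiplied by $\tfrac{m}{\mu+1}$ with $m=211$ and cyclomatic number $\mu=m-2^5+1=180$.

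Finally, for the three $R$-indices I would first evaluate, on each degree class, the sum degree $S_v=\bigl(\sum_u\deg u\bigr)-\deg v=422-\deg v$ and the multiplication degree $M_v=\bigl(\prod_u\deg u\bigr)/\deg v=(31^2\cdot 16^{10}\cdot 10^{20})/\deg v$, so that $r(v)=S_v+M_v$ takes the three values $s$, $t$, $w$ named in the statement. Then $R^1(\Gamma_5)=\sum_v r(v)^2$ follows from the class multiplicities $(2,10,20)$, and $R^2(\Gamma_5)=\sum_{uv\in E}r(u)r(v)$, $R^3(\Gamma_5)=\sum_{uv\in E}(r(u)+r(v))$ follow from the edge table (for $R^3$ one may instead use $\sum_v\deg(v)\,r(v)$). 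No conceptually new step is required; the sole real obstacle is the bookkeeping --- pinning down the edge-type multiplicities of $\Gamma_5$ exactly, the step most prone to slips, and then pushing the resulting surd and large-integer arithmetic through $J$, $R$, $R^2$, $R^3$ and $Mo$ without error.
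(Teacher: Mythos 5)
Your method is correct and follows the same basic route as the paper: both arguments reduce everything to the degree classes given by Theorem \ref{teoverdeg} and Remark \ref{remnum} (degrees $31,16,10$ with multiplicities $2,10,20$) and the two-valued distance function of Lemma \ref{lemdis}, and then evaluate each index by direct summation. What you do differently is to organize the whole computation around a single edge-type multiplicity table, counted once via $\binom{5}{i}\binom{5-i}{j-i}$, whereas the paper re-expands each index vertex-class by vertex-class and only writes out $M_2$ and $DD$, asserting the rest are ``similar.'' Your closed identities $DD=2(n-1)\sum_v\deg v-M_1$ and $Gut=(\sum_v\deg v)^2-M_1-M_2$, the observation $D_v=62-\deg v$, and the diameter-$2$ cancellation $|n_u-n_v|=|\deg u-\deg v|$ for the Mostar index are genuine simplifications that eliminate most of the hand bookkeeping; they reproduce $DD(\Gamma_5)=19682$ and $Gut(\Gamma_5)=124201$ immediately (with $\sum_v\deg v=422$ and $M_1(\Gamma_5)=6482$), and your Balaban normalization agrees with the paper's once one notes the paper's bracketed sum runs over ordered pairs.

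Be aware, however, that your (correct) edge table contradicts three of the values printed in the theorem. The table --- $1$ edge of type $(31,31)$, $20$ of $(31,16)$, $40$ of $(31,10)$, $20$ of $(16,16)$, $100$ of $(16,10)$, $30$ of $(10,10)$ --- sums to $211=3^5-2^5$ as required by Theorem \ref{teonumedge}, and is independently confirmed by the paper's own $M_2(\Gamma_5)=47401$, $Hm(\Gamma_5)$, $R(\Gamma_5)$ and $J(\Gamma_5)$, each of which forces exactly $40$ edges of type $(31,10)$. But it then yields $R^2(\Gamma_5)=s^2+20st+40sw+20t^2+100tw+30w^2$ rather than the stated $30sw$, $R^3(\Gamma_5)=\sum_v\deg(v)\,r(v)=62s+160t+200w$ rather than $52s+160t+190w$ (the stated coefficients sum to $402\neq 2|E(\Gamma_5)|=422$), and $Mo(\Gamma_5)=20\cdot 15+40\cdot 21+100\cdot 6=1740$ rather than $1720$. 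So carrying your proof to completion establishes corrected values for these three indices; the mismatches are errors in the statement (as is the label $M_2(\Gamma_4)$ for what is plainly $M_2(\Gamma_5)$), not gaps in your argument.
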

			
\begin{proof}
We will only show the proof for the Degree-distance Index and second Zagreb Index of $\Gamma_5$. The other indices can be proved similarly.
			
From the definition of the second Zagreb index, we have
\[
\begin{aligned}
{M_2}(\Gamma_5)&=\frac{1}{2}\Biggl[\binom{5}{0}\biggl[\sum_{{1v}\in E(\Gamma_5)}^{}{{deg(1)}{deg(v)}}\biggr]+\binom{5}{1}\biggl[\sum_{{{n_1}v}\in E(\Gamma_5)}^{}{{deg({n_1})}{deg(v)}}\biggr]\\
&+\binom{5}{2}\biggl[\sum_{{{n_2}v}\in E(\Gamma_5)}^{}{{deg({n_2})}{deg(v)}}\biggr]+\binom{5}{3}\biggl[\sum_{{{n_3}v}\in E(\Gamma_5)}^{}{{deg({n_3})}{deg(v)}}\biggr]\\
&+\binom{5}{4}\biggl[\sum_{{{n_4}v}\in E(\Gamma_5)}^{}{{deg({n_4})}{deg(v)}}\biggr]+\binom{5}{5}\biggl[\sum_{{{n_5}v}\in E(\Gamma_5)}^{}{{deg({n_5})}{deg(v)}}\biggr]\Biggr]\Biggr]\\
&=\frac{1}{2}\Biggl[\biggl[\sum_{{1v}\in E(\Gamma_5)}^{}{31deg(v)}\biggr]+5\biggl[\sum_{{{n_1}v}\in E(\Gamma_5)}^{}{16deg(v)}\biggr]+10\biggl[\sum_{{{n_2}v}\in E(\Gamma_5)}^{}{10deg(v)}\biggr]\\
&+10\biggl[\sum_{{{n_3}v}\in E(\Gamma_5)}^{}{10deg(v)}\biggr]+5\biggl[\sum_{{{n_3}v}\in E(\Gamma_5)}^{}{16deg(v)}\biggr]+\biggl[\sum_{{{n_5}v}\in E(\Gamma_5)}^{}{31deg(v)}\biggr]\Biggr]\\
&=\frac{1}{2}\Bigl[\bigl[31+5(16)+10(10)\bigr]+5(16)\bigl[2(31)+4(16)+10(10)\bigr]+10(10)\bigl[2(31)+5(16)+3(10)\bigr]\\
&+10(10)\bigl[2(31)+5(16)+3(10)\bigr]+5(16)\bigl[2(31)+4(16)+10(10)\bigr]+\bigl[31+5(16)+10(10)\bigr]\Bigr]\\
&=47401.
\end{aligned}
\]
			
For the Degree-distance index of$\Gamma_5$, we have
\[
\begin{aligned}
DD(\Gamma_5)&=\frac{1}{2}\Biggl[\binom{5}{0}\biggl[\sum_{\{1,v\}\subseteq V(\Gamma_5)}^{}{[deg(1)+deg(v)]d(1,v)}\biggr]+\binom{5}{1}\biggl[\sum_{\{n_1,v\}\subseteq V(\Gamma_5)}^{}{[deg(n_1)+deg(v)]d(n_1,v)}\biggr]\\
&+\binom{5}{2}\biggl[\sum_{\{n_2,v\}\subseteq V(\Gamma_5)}^{}{[deg(n_2)+deg(v)]d(n_2,v)}\biggr]+\binom{5}{3}\biggl[\sum_{\{n_3,v\}\subseteq V(\Gamma_5)}^{}{[deg(n_3)+deg(v)]d(n_3,v)}\biggr]\\
&+\binom{5}{4}\biggl[\sum_{\{n_4,v\}\subseteq V(\Gamma_5)}^{}{[deg(n_4)+deg(v)]d(n_4,v)}\biggr]+\binom{5}{5}\biggl[\sum_{\{n_5,v\}\subseteq V(\Gamma_5)}^{}{[deg(n_5)+deg(v)]d(n_5,v)}\biggr]\Biggr]\\
&=\frac{1}{2}\Biggl[\biggl[\sum_{\{1,v\}\subseteq V(\Gamma_5)}^{}{[31+deg(v)](1)}\biggr]+5\biggl[\sum_{\{n_1,v\}\subseteq V(\Gamma_5)}^{}{[16+deg(v)]d(n_1,v)}\biggr]\\
&+10\biggl[\sum_{\{n_2,v\}\subseteq V(\Gamma_5)}^{}{[10+deg(v)]d(n_2,v)}\biggr]+10\biggl[\sum_{\{n_3,v\}\subseteq V(\Gamma_5)}^{}{[10+deg(v)]d(n_3,v)}\biggr]\\
&+5\biggl[\sum_{\{n_4,v\}\subseteq V(\Gamma_5)}^{}{[16+deg(v)]d(n_4,v)}\biggr]+\biggl[\sum_{\{n_5,v\}\subseteq V(\Gamma_5)}^{}{[31+deg(v)](1)}\biggr]\Biggr]\\
&=\frac{1}{2}\biggl[\Bigl[(31+31)(1)+10(31+16)(1)+20(31+10)(1)\Bigr]\\
&+5\Bigl[2(16+31)(1)+4(16+16)(1)+5(16+16)(2)+10(16+10)(1)+10(16+10)\Bigr]\\
&+10\Bigl[2(10+31)(1)+5(10+16)(1)+5(10+16)(2)+3(10+10)(1)+16(10+10)\Bigr]\\
&+10\Bigl[2(10+31)(1)+5(10+16)(1)+5(10+16)(2)+3(10+10)(1)+16(10+10)\Bigr]\\
&+5\Bigl[2(16+31)(1)+4(16+16)(1)+5(16+16)(2)+10(16+10)(1)+10(16+10)\Bigr]\\
&+\Bigl[(31+31)(1)+10(31+16)(1)+20(31+10)(1)\Bigr]\biggr]\\
&=62(31)+460(16)+1040(10)\\
&=19 682.
			\end{aligned}
			\]
\end{proof}

\section{Conclusion and Some Problems}
In this paper, we introduced the concept of $k$-dprime divisor function graph and determined some of its basic properties. The general formula for its Wiener, hyper-Wiener, Harary, and First Zagreb index were also presented. We then computed other topological indices of the $k$-dprime divisor function graph for $k=3,4,5$.
	
Since this is an introductory paper about $k$-dprime divisor function graph, there are so many possible problems that the reader might consider. Some possible problems are (1) finding a general closed formula in determining the indices of $k$-dprime divisor function graph that were presented in Section 4, and (2) studying the energy and distance-eigenvalues of the $k$-dprime divisor function graph.
	
\section*{Acknowledgment}
We are thankful to our families and friends for their motivation. Our deepest gratitude also to the Central Luzon State University and the Department of Mathematics and Physics for their unending support. Lastly, we thank Professor Vignesh Ravi from Division of Mathematics, School of Advanced Sciences, Vallore Institute of Technology, Chennai Campus, for clarifying the origin of the  term semiprime divisor function graph.

\end{document}